\patchcmd{\@thm}{\thm@headfont{\scshape}}{\thm@headfont{\scshape\bfseries}}{}{}
\patchcmd{\@thm}{\thm@notefont{\fontseries\mddefault\upshape}}{}{}{}
\let\thm@indent\indent}{\let\thm@indent\indent}%
  {}{}
\newtheorem*{theorem*}{Theorem}
\newtheorem{theorem}[equation]{Theorem}
\newtheorem{lemma}[equation]{Lemma}
\newtheorem{proposition}[equation]{Proposition}
\theoremstyle{definition}
\theoremstyle{definition}
\newtheorem{remark}[equation]{Remark}
\theoremstyle{remark}
\numberwithin{subsection}{section}
\numberwithin{equation}{section}
\newcommand{\iso}{\xrightarrow{
   \,\smash{\raisebox{-0.50ex}{\ensuremath{\scriptstyle\sim}}}\,}}
\title[Non-admissible irreducible representations of $\mathrm{GL}_{n}$]{Non-admissible irreducible representations of $p$-adic $\mathrm{GL}_{n}$ in characteristic $p$}
\author{Eknath Ghate, Daniel Le, Mihir Sheth}
\address{School of Mathematics, Tata Institute of Fundamental Research \\ Homi Bhabha Road, Mumbai - 400005, India.}
\email{eghate@math.tifr.res.in}
\address{Department of Mathematics, Purdue University, 150 N. University Street, West Lafayette, IN 47907, USA.}
\email{ledt@purdue.edu}
\address{Department of Mathematics, Indian Institute of Science \\ Bangalore - 560012, India.}
\email{mihirsheth@iisc.ac.in}
\subjclass[2020]{22E50, 11S37}
\keywords{non-admissible irreducible representations; diagrams}
\begin{document}

\begin{abstract}
	Let $p>3$ and $F$ be a non-archimedean local field with residue field a proper finite extension of $\mathbb{F}_p$. We construct smooth absolutely irreducible non-admissible representations of $\mathrm{GL}_2(F)$ defined over the residue field of $F$ extending the earlier results of the authors for $F$ unramified over $\mathbb{Q}_{p}$. This construction uses the theory of diagrams of Breuil and Pa\v sk$\bar{\mathrm{u}}$nas. By parabolic induction, we obtain smooth absolutely irreducible non-admissible representations of $\mathrm{GL}_n(F)$ for $n>2$. 
\end{abstract}

\maketitle
\section{Introduction}
Let $p$ be a prime number. 
This note concerns the smooth representation theory of (connected) $p$-adic reductive groups over coefficient fields of characteristic $p$ initiated in \cite{BL}. 
This theory has its origins in the study of congruences between automorphic forms and plays an important role in the mod $p$ Langlands program proposed by Breuil \cite{Breuil}. 
In our context, smooth means that the stabilizers of vectors are open subgroups. 
Spaces of automorphic forms provide natural sources of smooth representations which are also \emph{admissible}, i.e.,~the space of vectors invariant under any compact open subgroup is finite-dimensional. 
Over characteristic $0$ fields, building upon Harish-Chandra's work \cite{HC}, Jacquet \cite{Jacquet} and Bernstein \cite{Bernstein} showed that any irreducible (or finite length) smooth representation of a $p$-adic reductive group is automatically admissible by reducing to the supercuspidal case. 
Vign\'eras extended this result to base fields of positive characteristic different from $p$ \cite{Vigneras}. 
The proofs use Haar measures which do not exist in characteristic $p$. 
Nevertheless, \cite[Question 1]{ahhv} asked whether a similar result holds in characteristic $p$. 
It is not hard to see that smooth irreducible representations of $p$-adic reductive groups which are anisotropic modulo center are finite-dimensional. 
Berger showed that any irreducible representation of $\mathrm{GL}_2(\mathbb{Q}_p)$ over an algebraically closed field of characteristic $p$ admits a central character \cite{Berger}. Barthel-Livn\'{e} and Breuil classified the irreducible representations of $\mathrm{GL}_2(\mathbb{Q}_p)$ over an algebraically closed field of characteristic $p$ with central character \cite{BL, Breuil} and a direct computation shows that each such representation is admissible. Together these results imply that any absolutely irreducible representation of $\mathrm{GL}_2(\mathbb{Q}_p)$ over a field of characteristic $p$ is admissible.
Recently, the authors \cite{Le,GS} used the theory of diagrams developed by Breuil and Pa\v sk$\bar{\mathrm{u}}$nas \cite{Paskunas,BP} to construct absolutely irreducible smooth representations of $\mathrm{GL}_2(F)$ in characteristic $p$ which are \emph{not} admissible when $F$ is a proper finite unramified extension of $\mathbb{Q}_p$ and $p>2$ (see also \cite{Ghate2022}). 
This naturally leads one to ask which $p$-adic reductive groups admit irreducible non-admissible representations. 
Here, we focus on the case of $\mathrm{GL}_n(F)$. 

\begin{theorem} \label{thm:intro}
Let $p>3$ and $n\geq 2$. Let $F$ be a non-archimedean local field with residue field a proper finite extension of $\mathbb{F}_p$. 
Then there is an absolutely irreducible non-admissible smooth representation of $\mathrm{GL}_n(F)$ defined over the residue field of $F$. 
\end{theorem}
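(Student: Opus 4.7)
The plan is to split the proof into two parts. First, I would extend the construction of absolutely irreducible non-admissible smooth representations of $\mathrm{GL}_2(F)$ from the unramified case treated by the authors in \cite{Le,GS} to the general case where the residue field $k_F$ is a proper extension of $\mathbb{F}_p$ but $F/\mathbb{Q}_p$ may be ramified. Second, I would bootstrap from $\mathrm{GL}_2$ to $\mathrm{GL}_n$ (for $n \geq 3$) via parabolic induction from a standard Levi containing a $\mathrm{GL}_2(F)$-factor.

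For the $\mathrm{GL}_2(F)$-step, I would use the theory of diagrams of Breuil and Pa\v{s}k\={u}nas. Recall that a diagram is a triple $(D_0, D_1, r)$ in which $D_0$ is a smooth representation of $KZ$ (with $K = \mathrm{GL}_2(\mathcal{O}_F)$ and $Z$ the center), $D_1$ is a smooth representation of the normalizer $N$ of the Iwahori $I \subset K$, and $r \colon D_1 \hookrightarrow D_0^{I_1}$ is an $I$-equivariant embedding into the pro-$p$ Iwahori invariants. To such data one attaches a smooth representation $\Omega(D_0,D_1,r)$ of $\mathrm{GL}_2(F)$ whose restrictions to $KZ$ and to $N$ recover $D_0$ and $D_1$. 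The strategy is to exhibit an explicit diagram with $D_0$ infinite-dimensional over $k_F$ (which forces $\Omega$ to be non-admissible) and then to extract from $\Omega$ an absolutely irreducible subrepresentation that inherits this non-admissibility by ruling out non-trivial sub-diagrams. The main technical obstacle, relative to the unramified construction of \cite{Le,GS}, is the more intricate combinatorics of the pro-$p$ Iwahori filtration and of the relevant Hecke-type operators when $F$ is ramified, because the uniformizer of $F$ is no longer $p$; the hypothesis $p > 3$ should enter precisely to ensure the genericity/separation properties of weights that underlie the irreducibility analysis.

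For $n \geq 3$, let $\pi$ be the representation produced for $\mathrm{GL}_2(F)$ and let $P \subset \mathrm{GL}_n(F)$ be the standard parabolic of block type $(2,1,\ldots,1)$, with Levi $M \cong \mathrm{GL}_2(F) \times (F^\times)^{n-2}$. For smooth characters $\chi_1,\ldots,\chi_{n-2} \colon F^\times \to k_F^\times$, I would form
\[
\Pi \;=\; \operatorname{Ind}_P^{\mathrm{GL}_n(F)} \bigl(\pi \otimes \chi_1 \otimes \cdots \otimes \chi_{n-2}\bigr).
\]
Non-admissibility of $\Pi$ would follow from an Iwasawa/Mackey decomposition argument: for a suitable compact open subgroup $J \subset \mathrm{GL}_n(F)$ chosen compatibly with $P$, the fixed space $\Pi^J$ contains a subspace isomorphic to $\pi^{J_0}$ for some compact open $J_0 \subset \mathrm{GL}_2(F)$, and the latter is already infinite-dimensional. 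Absolute irreducibility of $\Pi$ would then follow by invoking the known criteria for the irreducibility of parabolic induction in characteristic $p$ (cf. work of Abe, Henniart, Herzig and Vign\'eras) with the characters $\chi_i$ chosen sufficiently generic to avoid the reducibility locus; the only non-routine point is to verify that these criteria still apply when the $\mathrm{GL}_2$-factor is non-admissible, but this should be essentially formal via the Jacquet-module formalism.

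I expect the $\mathrm{GL}_2$-step, and in particular the verification of absolute irreducibility of the diagram in the ramified setting, to be the main obstacle; the passage from $\mathrm{GL}_2$ to $\mathrm{GL}_n$ should then be a fairly formal consequence of standard parabolic induction techniques.
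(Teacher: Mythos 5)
Your high-level plan matches the paper's structure: construct an absolutely irreducible non-admissible representation of $\mathrm{GL}_2(F)$ via diagrams, then parabolically induce from the Levi $\mathrm{GL}_2\times(\mathrm{GL}_1)^{n-2}$. Two substantive points are glossed over, however, and one of them is a genuine gap.

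For the $\mathrm{GL}_2$-step, you correctly identify that the diagram construction must be adapted beyond the unramified case, but you misdiagnose the obstruction. The issue is not ``the combinatorics when the uniformizer is no longer $p$'': after reducing to the finite group $\Gamma=\mathrm{GL}_2(\mathbb{F}_{p^f})$, the ramification of $F$ plays no role in building $D_0$. The real obstruction in earlier works was that the irreducibility arguments of Breuil--Pa\v sk\={u}nas rely on Witt vector computations available only for $F/\mathbb{Q}_p$ unramified. The paper sidesteps this by using the cyclic modules of \cite{she22} and splicing two of them (arising from two cyclic permutations of the tuple $\bm\mu$) into a module $D_0$ with multiplicity-free socle and cosocle containing two ``loops.'' The $\Pi$-action on $D_0(\infty)=\bigoplus_{i\in\mathbb{Z}} D_0(i)$ is then rigged to shift the index $+1$ around one loop and $-1$ around the other, with a twist by $\lambda_i$ at the spliced weight; the generic choice of $(\lambda_i)$ is what makes the diagram irreducible. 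None of this is present in your sketch, and it is the heart of the $\mathrm{GL}_2$-construction.

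For $n>2$, your non-admissibility argument is correct (it is a one-line Mackey/Iwasawa computation), but the irreducibility step is where the gap lies. You propose to ``invoke the known criteria for irreducibility of parabolic induction (Abe--Henniart--Herzig--Vign\'eras)'' and assert that extending these to non-admissible inducing data ``should be essentially formal via the Jacquet-module formalism.'' It is not: the AHHV classification and its irreducibility criteria are proved under an admissibility hypothesis, and there is no formal reason for Jacquet/ordinary-parts arguments to carry over. The paper instead proves irreducibility directly: it shows (Lemma \ref{lemma1-irreducibility}) that any $\mathrm{GL}_n(\mathcal{O}_F)$-weight of $\pi$ generates $\pi$, using Herzig's comparison isomorphism between $\mathrm{c\text{-}Ind}$ and parabolic induction for $\mathrm{M}$-regular weights. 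To invoke Herzig's isomorphism one must first show that the weight map $f_\rho\colon \mathrm{c\text{-}Ind}_{K}^{\mathrm{GL}_2(F)}\tau_\rho\to\rho$ is a Hecke eigenvector, which requires proving $T\cdot f_\rho = 0$; the proof of this uses the specific shape of the spliced module $D_0$ (the $K$-subrepresentation generated by $\Pi v$ has length at most $3$, while $\mathrm{Ind}_I^K\chi(\tau_\rho)^s$ has length at least $4$). Then $\mathrm{M}$-regularity of all weights is arranged by a counting argument choosing the characters $\chi_i$ to alternate between two exponents avoiding the finitely many determinant powers in $\mathrm{soc}_K\rho$. So the passage to $n>2$ is not ``fairly formal''; it leans essentially on the explicit construction of $\rho$ and on Hecke-algebra computations, not on a generic-position argument for the $\chi_i$ alone.
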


\noindent The hypothesis in Theorem \ref{thm:intro} that the residue field of $F$ is not $\mathbb{F}_p$ cannot be entirely removed given the results of Berger, Barthel-Livn\'{e}, and Breuil above (see also Remark \ref{main-idea-rmk}). Following the methods of \cite{Le}, we also have a counterexample to a Schur-type lemma for irreducible representations of $\mathrm{GL}_2(F)$. 

\begin{theorem}\label{thm:schur}
Let $p>3$ and $F$ be a non-archimedean local field with residue field a proper finite extension of $\mathbb{F}_p$. 
Then there is an irreducible smooth representation of $\mathrm{GL}_2(F)$ over the residue field of $F$ whose endomorphism algebra contains an algebraically closed field. 
\end{theorem}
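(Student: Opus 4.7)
The plan is to mimic the diagram-theoretic construction of Theorem \ref{thm:intro} in the case $n=2$, following the method of \cite{Le}, but to engineer the underlying diagram so that its endomorphism algebra already contains an algebraic closure of the residue field $k_F$. Recall that the proof of the $n=2$ case in \cite{Le} builds a Breuil--Pa\v sk$\bar{\mathrm{u}}$nas diagram $D=(D_0, D_1, r)$ over $k_F$ as a direct limit of finite sub-diagrams, and then appeals to the construction of \cite{Paskunas, BP} to produce a $\mathrm{GL}_2(F)$-representation $\pi$ with the desired properties. The passage from $D$ to $\pi$ is functorial, so any endomorphism of the diagram induces an endomorphism of $\pi$, and hence $\mathrm{End}(D)$ injects into $\mathrm{End}_{\mathrm{GL}_2(F)}(\pi)$.

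Accordingly, I would aim to produce $D$ together with an embedding of $\overline{\mathbb{F}_p}$ into $\mathrm{End}(D)$. The crucial input is again that $k_F$ is a proper extension of $\mathbb{F}_p$, which supplies non-trivial unramified characters of the Iwahori that can be used to twist the building blocks of the diagram. Concretely, I would fix an enumeration $f_1, f_2, \ldots$ of the monic irreducibles in $k_F[X]$ and, at the $n$-th stage of the inductive glueing in \cite{Le}, attach to the finite sub-diagram $D^{(n)}$ a Galois-twisted copy of itself realising a prescribed root of $f_n$. This has the effect of adjoining to $\mathrm{End}(D^{(n+1)})$ a new scalar algebraic over $k_F$, while the twist is arranged so that it commutes with both the $K\cdot Z$-action on the $D_0$-side and the $N\cdot Z$-action on the $D_1$-side and is compatible with the structure map $r$. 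Passing to the colimit, $\mathrm{End}(D)$ contains a field in which every $f_n$ has a root, and hence an algebraic closure of $k_F$.

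Irreducibility of the resulting $\pi$ should follow by the same argument as in \cite{Le}: any nonzero $\mathrm{GL}_2(F)$-subrepresentation meets the socle $D_0^{K_1}$ nontrivially, and the socle is chosen in the construction to generate all of $\pi$ as a $\mathrm{GL}_2(F)$-module. The added endomorphisms preserve this socle componentwise, so the enlarged scalars act faithfully on $\pi$, not just on $D$, and are realised inside $\mathrm{End}_{\mathrm{GL}_2(F)}(\pi)$.

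The main obstacle will be to perform the twisting compatibly at every finite stage without disrupting the inductive features of \cite{Le} that force both non-admissibility and irreducibility of the limit. In particular, one has to verify that the iterated Galois twists do not introduce additional pro-$p$-Iwahori invariants or unwanted intertwiners between the glued components, either of which could collapse the argument that no proper subdiagram of $D$ survives the limit. Granting this compatibility check, $\pi$ is irreducible over $k_F$ and $\mathrm{End}_{\mathrm{GL}_2(F)}(\pi) \supseteq \overline{\mathbb{F}_p}$, which yields Theorem \ref{thm:schur}.
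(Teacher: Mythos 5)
Your approach is genuinely different from the paper's, and it has a gap that is likely fatal. The paper's proof is much simpler: take the representation $\pi$ over $\overline{\mathbb{F}}_p$ already produced in the proof of Theorem~\ref{thm:intro} for $n=2$, choose the twisting parameters $(\lambda_i)\in\prod_{i\in\mathbb{Z}}\overline{\mathbb{F}}_p^{\times}$ so that the $\mathbb{F}_{p^f}$-span of $\{\lambda_i^2\}$ is all of $\overline{\mathbb{F}}_p$, and then \emph{restrict scalars} from $\overline{\mathbb{F}}_p$ to $\mathbb{F}_{p^f}$. Since $\pi$ is an $\overline{\mathbb{F}}_p$-vector space and the $G$-action is $\overline{\mathbb{F}}_p$-linear, scalar multiplication by any $\alpha\in\overline{\mathbb{F}}_p$ is automatically an $\mathbb{F}_{p^f}[G]$-endomorphism of the restricted representation, so $\overline{\mathbb{F}}_p\subseteq\mathrm{End}_{\mathrm{GL}_2(F)}(\pi|_{\mathbb{F}_{p^f}})$ with no further work. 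The only thing to check is that $\pi|_{\mathbb{F}_{p^f}}$ remains irreducible over the smaller field, which is exactly what the spanning condition on $\{\lambda_i^2\}$ guarantees (following the proof of \cite[Theorem 1.2]{Le}). This is the key idea you are missing: you never need to \emph{build} $\overline{\mathbb{F}}_p$ into the endomorphisms of a $\mathbb{F}_{p^f}$-diagram, because it is already there for free once you descend scalars.

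Your proposal instead tries to engineer an embedding $\overline{\mathbb{F}}_p\hookrightarrow\mathrm{End}(D)$ directly into a diagram over $k_F$ by iterated Galois twisting. Two concrete problems. First, the ``Galois-twisted copy realising a prescribed root of $f_n$'' is not defined, and it is not clear that any such twist can simultaneously commute with the $KZ$-action on $D_0$, the $N$-action on $D_1$, and the embedding $r$, while also enlarging the endomorphism ring by exactly the scalar you want; this is precisely the kind of delicate compatibility that the restriction-of-scalars trick sidesteps. Second, and more seriously, your claim that ``the passage from $D$ to $\pi$ is functorial, so any endomorphism of the diagram induces an endomorphism of $\pi$'' is not justified: the passage in \cite{BP}/\cite{Paskunas} goes through injective envelopes and a \emph{non-canonical} extension of the $N$-action to $\Omega$ (see the use of \cite[Lemma 9.6]{BP} and \cite[Corollary 5.5.5]{Paskunas} in the $n=2$ proof), so a diagram endomorphism need not extend compatibly to the chosen $G$-action on $\Omega(\infty)$. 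Without a functoriality statement you would have to redo the envelope construction in a way that tracks the extra scalars, which is substantially more work than the paper's two-line argument. You should replace your construction by: take $\pi$ over $\overline{\mathbb{F}}_p$ as in Theorem~\ref{thm:intro} for $n=2$, impose that the $\mathbb{F}_{p^f}$-span of $\{\lambda_i^2\}$ is $\overline{\mathbb{F}}_p$, and restrict scalars.
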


We prove Theorem \ref{thm:intro} by first constructing smooth absolutely irreducible non-admissible representations for $\mathrm{GL}_2(F)$. The construction is uniform and provides a new construction in the cases when $F$ is an unramified extension of $\mathbb{Q}_p$. By parabolically inducing non-admissible irreducible representations of $\mathrm{GL}_{2}(F)$, we obtain such representations of $\mathrm{GL}_n(F)$ for $n>2$. The proof of the irreducibility of induced representations uses Herzig's comparison isomorphism between compact and parabolic inductions. We remark that the non-admissible irreducible representations constructed here have a central character. The ones for $\mathrm{GL}_2(F)$ are necessarily supersingular by the classification of Barthel-Livn\'{e}. The ones for $\mathrm{GL}_n(F)$ with $n>2$ are, by contrast, not supersingular.

The reason for restricting to unramified extensions of $\mathbb{Q}_p$ in our earlier works is that we used some of the results of \cite{BP} relying on delicate Witt vector computations to prove the irreducibility. 
Recently, one of us \cite{she22} introduced cyclic modules to circumvent the irreducibility arguments of \cite{BP} and construct infinitely many supercuspidal representations of $\mathrm{GL}_2(F)$ with fixed central character under the assumptions in Theorem \ref{thm:intro}. 
Our construction of an irreducible non-admissible representation of $\mathrm{GL}_2(F)$ involves splicing two cyclic modules together. 
The resulting diagram is quite different from the diagrams appearing in \cite{BP,Le,GS}, namely the $\mathrm{GL}_2(\mathcal{O}_F)$-subrepresentation generated by a pro-$p$ Iwahori fixed eigenvector can have reducible socle. 
This construction was inspired by similar features of the mod $p$ cohomology of $U(3)$ arithmetic manifolds (see \cite{LLLM}). 
Finally, one of the motivations for our construction is a recent conjecture of Emerton, Gee, Hellmann, and Zhu \cite[Conjecture 2.4.3]{EGH} stating that there should exist a fully faithful functor from the category of smooth representations of $\mathrm{GL}_n(F)$ to the category of quasicoherent sheaves on an appropriate moduli stack of Langlands parameters. 
The existence of irreducible non-admissible smooth $\mathrm{GL}_n(F)$-representations should have an interpretation in terms of the geometry of this moduli stack. We hope to return to this in future work. 

\subsection*{Acknowledgements} We thank the anonymous referee for many helpful comments and corrections on an earlier version of this paper. During this work, the second-named author was supported by a start-up grant from Purdue University, and the third-named author was supported by the Raman Postdoctoral Fellowship from Indian Institute of Science, Bangalore. We also like to thank the organizers of the International Centre for Theoretical Sciences (ICTS) program ``Elliptic curves and the special values of $L$-functions" (code:ICTS/ecl2022/8) for their invitation and the hospitality during which the collaboration on this project began.

\subsection*{Notation and convention} Let $p>3$ be a prime number. Let $\overline{\mathbb{F}}_{p}$ be the algebraic closure of the finite field $\mathbb{F}_{p^{f}}$ of size $p^{f}$. Fix an embedding $\mathbb{F}_{p^{f}}\hookrightarrow\overline{\mathbb{F}}_{p}$. Let $F$ be a non-archimedean local field of residual characteristic $p$ and residue degree $f>1$. Let $\mathcal{O}_{F}\subseteq F$ be the valuation ring with a uniformizer $\varpi$. Throughout the note, except for the last part, we work with the group $\mathrm{GL}_{2}(F)$. Let $G=\mathrm{GL}_{2}(F)$, $K=\mathrm{GL}_{2}(\mathcal{O}_{F})$, $\Gamma=\mathrm{GL}_{2}(\mathbb{F}_{p^{f}})$, and $Z$ be the center of $G$. Let $B$ and $U$ be the subgroups of $\Gamma$ consisting of the upper triangular matrices and the upper triangular unipotent matrices respectively. Let $I$ and $I(1)$ be the preimages of $B$ and $U$ respectively under the reduction modulo $\varpi$ map $K\twoheadrightarrow\Gamma$. The subgroups $I$ and $I(1)$ of $K$ are the Iwahori and the pro-$p$ Iwahori subgroup of $K$ respectively. The normalizer $N$ of $I$ in $G$ is a subgroup generated by $I$ and $\Pi=\left(\begin{smallmatrix}
0 & 1 \\ \varpi & 0 \end{smallmatrix} \right)$. Note that $N$ is also the normalizer of $I(1)$ in $G$. Let $K(1)$ denote the kernel of the map $K\twoheadrightarrow\Gamma$, i.e., the first principal congruence subgroup of $K$. Unless stated otherwise, all representations considered in this note are on $\overline{\mathbb{F}}_{p}$-vector spaces. 

A \emph{weight} is an irreducible representation of $\Gamma$. Any weight is of the form of \[\left(\bigotimes\limits_{j=0}^{f-1}\mathrm{Sym}^{r_{j}}\overline{\mathbb{F}}_{p}^{2}\circ\Phi^{j}\right)\otimes\mathrm{det}^{m}\] for some integers $0\leq r_{0},\ldots,r_{f-1}\leq p-1$ and $0\leq m\leq p^{f}-2$, where $\Phi:\Gamma\rightarrow\Gamma$ is the automorphism induced by the Frobenius map $\alpha\mapsto \alpha^{p}$ on $\mathbb{F}_{p^{f}}$ and $\mathrm{det}:\Gamma\rightarrow\mathbb{F}_{p^{f}}^{\times}$ is the determinant character. We denote such a weight by $\bm{r}\otimes\mathrm{det}^{m}$ where $\bm{r}$ is the $f$-tuple $(r_{0},\ldots,r_{f-1})$ of integers. Let $\sigma=\bm{r}\otimes\mathrm{det}^{m}$ be a weight; its subspace $\sigma^{U}$ of $U$-fixed vectors is $1$-dimensional and stable under the action of $B$ because $B$ normalizes $U$. The resulting $B$-character, denoted by $\chi(\sigma)$, sends $\left(\begin{smallmatrix}a & b \\ 0 & d \end{smallmatrix} \right)\in B$ to $a^{r}(ad)^{m}$ where $r=\sum_{j=0}^{f-1}r_{j}p^{j}$. Any $B$-character valued in $\overline{\mathbb{F}}_{p}^{\times}$ factors through the quotient $B/U$ which is identified with the subgroup of diagonal matrices in $B$ by the section $B/U\rightarrow B$, $\left(\begin{smallmatrix}a & 0 \\ 0 & d \end{smallmatrix} \right)U\mapsto\left(\begin{smallmatrix}a & 0 \\ 0 & d \end{smallmatrix} \right)$. For a $B$-character $\chi$, let $\chi^{s}$ be the inflation to $B$ of the conjugation-by-$s$ character $t\mapsto\chi(sts^{-1})$ on $B/U$ where $s=\left(\begin{smallmatrix}0 & 1 \\ 1 & 0 \end{smallmatrix} \right)$. We say that a weight is \emph{generic} if it is not equal to $(0,0,\ldots,0)\otimes\mathrm{det}^{m}$ or $(p-1,p-1,\ldots,p-1)\otimes\mathrm{det}^{m}$ for any $m$. The map $\sigma\mapsto\chi(\sigma)$ gives a bijection from the set of generic weights to the set of $B$-characters $\chi$ such that $\chi\neq\chi^{s}$. If $\sigma$ is a generic weight, let us denote by $\sigma^{[s]}$ the generic weight corresponding to the character $\chi(\sigma)^{s}$. For $\sigma=\bm{r}\otimes\mathrm{det}^{m}$, we have $\sigma^{[s]}=(p-1-r_{0},\ldots,p-1-r_{f-1})\otimes\mathrm{det}^{m+r}$. For a $B$-representation $V$ and a character $\chi$, we denote by $V^{\chi}$ the $\chi$-isotypic component of $V$. We refer the reader to \cite[\S 1]{BL} for all non-trivial assertions in this paragraph.

Given two weights $\sigma$ and $\tau$, let $E(\sigma, \tau)$ be the unique non-split $\Gamma$-extension \[0\longrightarrow\sigma\longrightarrow E(\sigma, \tau)\longrightarrow\tau\longrightarrow 0\] of $\tau$ by $\sigma$ whenever it exists \cite[Corollary 5.6]{BP}. A finite-dimensional representation of $\Gamma$ is said to be \emph{multiplicity-free} if the multiset of its Jordan-H\"{o}lder factors is multiplicity-free. For any group $H$, the socle and the cosocle of an $H$-representation $\pi$ are denoted by $\mathrm{soc}_{H}\pi$ and $\mathrm{cosoc}_{H}\pi$ respectively.   

Note that a weight is a smooth irreducible representation of $K$ (resp. of $KZ$) and a $B$-character is a smooth $I$-character (resp. $IZ$-character) via the map $K\twoheadrightarrow\Gamma$ (resp. $KZ\twoheadrightarrow\Gamma$). In fact, the weights exhaust all smooth irreducible representations of $K$ (resp. of $KZ$ such that $\varpi$ acts trivially). In the last section, we also talk of $\mathrm{M}(\mathcal{O}_{F})$-weights for a Levi subgroup $\mathrm{M}\subseteq\mathrm{GL}_{n}$ which mean smooth irreducible representations of $\mathrm{M}(\mathcal{O}_{F})$. 

\section{The spliced module}

We recall some notation from \cite[\S 1]{she22} that is used in this section. Let $\left(\mathbb{Z}\pm x\right)^{f}$ be the set of $f$-tuples of linear polynomials in $x$ having integral coefficients with leading coefficient $\pm 1$. For $\bm{\lambda}=(\lambda_{0}(x),\ldots,\lambda_{f-1}(x))$ and $\bm{\lambda}'=(\lambda_{0}'(x),\ldots,\lambda_{f-1}'(x))\in(\mathbb{Z}\pm x)^{f}$, let \begin{linenomath*}
	\[\bm{\lambda}\circ\bm{\lambda}':=(\lambda_{0}(\lambda_{0}'(x)),\ldots,\lambda_{f-1}(\lambda_{f-1}'(x)))\in(\mathbb{Z}\pm x)^{f}.\]
\end{linenomath*}
Let $\bm{\mu}\in(\mathbb{Z}\pm x)^{f}$ be the $f$-tuple of polynomials defined by 
\begin{linenomath*}
\begin{align*}
&\mu_{0}(x):=x-1,\nonumber\\&\mu_{1}(x):=p-2-x,\\&\mu_{j}(x):=p-1-x 
\hspace{2mm}\text{for $2\leq j\leq f-1$}.\nonumber
\end{align*}
\end{linenomath*}
When $f=2$, the condition $2\leq j\leq f-1$ is empty and $\bm{\mu}=(\mu_{0}(x),\mu_{1}(x))=(x-1,p-2-x)$. Let $g\in S_{f}$ denote the cyclic permutation $(f(f-1)\ldots 21)$, and let 
\begin{linenomath*}
\[\bm\mu^{(k)}:=g^{k-1}\bm{\mu}\circ g^{k-2}\bm{\mu}\circ\ldots\circ g\bm{\mu}\circ\bm\mu \hspace{2mm} \text{for all $1\leq k\leq l$},\]
\end{linenomath*}
where $l$ is equal to $f$ (resp. $2f$) if $f$ is odd (resp. even). We let $\bm\mu^{(0)}=(x,x,\ldots,x)$. It follows from the definition of $\bm\mu^{(k)}$ that, for $1\leq k\leq l$, 
\begin{equation}\label{inductiveformulae}
\mu_{j}^{(k)}(x)=
\begin{cases}
\mu_{j}^{(k-1)}(x)-1& \text{if $j\equiv 1-k\mod{f}$,}\\
p-2-\mu_{j}^{(k-1)}(x)& \text{if $j\equiv 2-k\mod{f}$,}\\
p-1-\mu_{j}^{(k-1)}(x) & \text{otherwise}.
\end{cases}
\end{equation} 
Recall from \cite[Lemma 1.4 (1)]{she22} that $\bm\mu^{(l)}=\bm\mu^{(0)}=(x,x,\ldots,x)$.
We assign to $\bm\mu^{(k)}$ an element $\bm{m}^{(k)}\in(\mathbb{Z}/2\mathbb{Z})^{f}$ according to the rule that its $j$-th entry $m^{(k)}_{j}$ is $0$ if and only if the sign of $x$ in $\mu_{j}^{(k)}(x)$ is $+$. 
\begin{lemma}\label{relation_betn_mk_and_ml-k}
	\begin{enumerate}
		\item For all $1\leq k \leq l$, $\bm{m}^{(k)}=g^{k}\bm{m}^{(l-k)}$.
		\item For $1\leq k_{1},k_{2}\leq l-1$ and $k_{1}\neq k_{2}$, $\bm{m}^{(k_{1})}$ and $\bm{m}^{(k_{2})}$ are (cyclic) permutations of each other if and only if $k_{2}=l-k_{1}$.
		\item For $1\leq k\leq l-1$, $k\neq \frac{l}{2}$ if $f$ is even, $\bm{m}^{(k)}$ is not equal to any of its non-trivial cyclic permutations.
	\end{enumerate} 
\end{lemma}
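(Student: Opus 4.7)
The plan is to extract first an explicit formula for $m_j^{(k)}$ and then use it for all three parts. From the recursion (\ref{inductiveformulae}), each step takes $\bm{m}^{(k)}$ from $\bm{m}^{(k-1)}$ by toggling every coordinate except the one at position $(1-k)\bmod f$. Iterating from $\bm{m}^{(0)} = 0$ gives the working formula
\[
m_j^{(k)} \equiv k + n_j(k) \pmod{2}, \qquad n_j(k) := \#\{1 \leq i \leq k : i \equiv 1-j \pmod f\}.
\]

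For part (1), I would avoid the formula and argue directly. Since $\bm\mu^{(l)} = (x,\ldots,x)$ and $g$ distributes over componentwise composition, one has the factorization
\[
\bm\mu^{(l)} \;=\; \bigl(g^{l-1}\bm\mu \circ \cdots \circ g^{k}\bm\mu\bigr) \circ \bm\mu^{(k)} \;=\; \bigl(g^{k}\bm\mu^{(l-k)}\bigr) \circ \bm\mu^{(k)}.
\]
So in each coordinate $g^k\bm\mu^{(l-k)}$ is the compositional inverse of $\bm\mu^{(k)}$. Since the inverse of a linear polynomial $\epsilon x + c$ with $\epsilon = \pm 1$ is $\epsilon x - \epsilon c$, which has the same leading sign $\epsilon$, the sign tuples of $g^k\bm\mu^{(l-k)}$ and $\bm\mu^{(k)}$ coincide, giving $\bm{m}^{(k)} = g^k\bm{m}^{(l-k)}$.

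For part (2), the ``if'' direction is immediate from (1). For ``only if'', the plan is to use the Hamming weight $w(k) := \#\{j : m_j^{(k)} = 1\}$, which is preserved by cyclic shifts. Using the formula for $m_j^{(k)}$ and splitting into cases by the parity of $k$ and by whether $k < f$, $k = f$, or $k > f$, I would compute $w(k)$ explicitly and verify that $w(k_1) = w(k_2)$ for distinct $k_1, k_2 \in \{1,\ldots,l-1\}$ forces $k_2 = l-k_1$.

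For part (3), the task is to show $\bm{m}^{(k)}$ has trivial cyclic stabilizer. Any period of $\bm{m}^{(k)}$ equals a period of the function $a \mapsto n_{1-a}(k)\bmod 2$ on $\mathbb{Z}/f$. I would show that under the stated hypotheses this function is the indicator of a contiguous non-empty proper subset of $\{1,\ldots,f\}$ (namely $\{1,\ldots,k\}$ for $k < f$ and $\{k-f+1,\ldots,f\}$ for $k > f$), and then observe that the indicator of such a block cannot be invariant under a non-trivial shift $d \mid f$ with $d < f$, since that would force the block to be a union of cosets of $\langle d\rangle$ in $\mathbb{Z}/f$. The excluded case $k = l/2 = f$ (when $f$ is even) produces the all-ones vector, which is fixed by every shift. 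The main technical obstacle will be the case analysis for $w(k)$ in part (2), though it is routine once the explicit formula is in hand.
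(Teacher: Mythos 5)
Your proposal is correct. It matches the paper's argument for parts (2) and (3): the paper counts the zeros of $\bm m^{(k)}$ (the complement of your Hamming weight) and obtains the same case table, and for (3) it identifies $\bm m^{(k)}$ as a cyclic shift of a contiguous run of $0$'s followed by $1$'s, exactly the block structure you extract — the only cosmetic difference being that the paper first uses (1) to reduce to $1\le k\le f-1$, whereas you treat $k<f$, $k=f$, and $k>f$ directly from your formula $m_j^{(k)}\equiv k+n_j(k)$. Part (1) is where you take a genuinely different route. The paper argues additively inside $(\mathbb{Z}/2\mathbb{Z})^f$: from $\bm m^{(k)}=\sum_{i=0}^{k-1}g^i\bm m^{(1)}$ and $\bm m^{(l)}=0$ it reads off $\bm m^{(k)}+g^k\bm m^{(l-k)}=0$ and uses that every element of $(\mathbb{Z}/2\mathbb{Z})^f$ is its own additive inverse. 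You instead argue multiplicatively in $\bigl((\mathbb{Z}\pm x)^f,\circ\bigr)$: factoring $(x,\ldots,x)=\bm\mu^{(l)}=g^k\bm\mu^{(l-k)}\circ\bm\mu^{(k)}$ using that $g$ distributes over $\circ$, and observing that the compositional inverse of an affine polynomial with slope $\pm1$ has the same slope. Both are short and elementary; yours has the advantage of working directly with the $\bm\mu^{(k)}$ rather than first invoking the telescoping identity for $\bm m^{(k)}$, and it makes the structural reason (inversion preserves the leading sign) transparent. One small caveat: your part (2) is left as a plan (``I would compute $w(k)$ \ldots and verify''); when carried out it reproduces the paper's table and does force $k_2\in\{k_1,l-k_1\}$, so there is no gap, only an unfinished computation.
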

\begin{proof}
(1) By definition, $\bm{m}^{(k)}=\sum_{i=0}^{k-1}g^{i}\bm{m}^{(1)}$. Since $\bm{m}^{(l)}=(0,0,\ldots,0)$, we have \[\sum_{i=0}^{l-1}g^{i}\bm{m}^{(1)}=(0,0,\ldots,0).\] Thus, \[\sum_{i=0}^{k-1}g^{i}\bm{m}^{(1)}+g^{k}\sum_{i=0}^{l-k-1}g^{i}\bm{m}^{(1)}=(0,0,\ldots,0),\hspace{2mm}\text{i.e.,}\hspace{2mm}\bm{m}^{(k)}+g^{k}\bm{m}^{(l-k)}=(0,0,\ldots,0).\] Since an element of $(\mathbb{Z}/2\mathbb{Z})^{f}$ is equal to its additive inverse, (1) follows. 

(2) If $\bm{m}^{(k_{1})}$ and $\bm{m}^{(k_{2})}$ are (cyclic) permutations of each other for $1\leq k_{1},k_{2}\leq l-1$, then the tuples $\bm{m}^{(k_{1})}$ and $\bm{m}^{(k_{2})}$ have the same number of $0$'s. When $f$ is odd (resp. even), the number of $0$'s in $\bm{m}^{(k)}$ for odd $k$ equals $k$ (resp. $k$ if $k\leq \frac{l}{2}$ and $l-k$ if $k>\frac{l}{2}$), and the number of $0$'s in $\bm{m}^{(k)}$ for even $k$ equals $l-k$ (resp. $\frac{l}{2}-k$ if $k\leq \frac{l}{2}$ and $k-\frac{l}{2}$ if $k>\frac{l}{2}$). Hence, it follows that if $\bm{m}^{(k_{1})}$ and $\bm{m}^{(k_{2})}$ are (cyclic) permutations of each other, then either $k_{1}=k_{2}$ or $k_{2}=l-k_{1}$. This proves the forward implication. The converse statement follows from (1). 

(3) By (1), it is enough to show (3) for $1\leq k\leq f-1$. Now, (3) follows from the observation that for $1\leq k\leq f-1$, the tuple $\bm{m}^{(k)}$ is a cyclic permutation of a tuple of the form $k$ $0$'s followed by $(f-k)$ $1$'s (resp. $(f-k)$ $0$'s followed by $k$ $1$'s) for odd (resp. even) $k$. 
\end{proof}

\begin{lemma}\label{multiplicity_free_lemma} $\lbrace\bm{\mu}^{(1)},\bm{\mu}^{(2)},\ldots,\bm{\mu}^{(l-1)},\bm{\mu}^{(l)},g\bm{\mu}^{(1)},g\bm{\mu}^{(2)},\ldots,g\bm{\mu}^{(l-1)}\rbrace$ is a set of distinct $f$-tuples in $(\mathbb{Z}\pm x)^{f}$.
\end{lemma}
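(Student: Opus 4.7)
The plan is to compare the tuples first through their sign patterns $\bm{m}^{(k)} \in (\mathbb{Z}/2\mathbb{Z})^{f}$, using Lemma~\ref{relation_betn_mk_and_ml-k} to cut down to a very short list of possible collisions, and then to handle those residual cases by directly comparing constant terms via the inductive formula \eqref{inductiveformulae}.

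First I would dispose of $\bm{\mu}^{(l)} = (x, \ldots, x)$: its sign pattern is $\mathbf{0}$, and the zero-counts extracted in the proof of Lemma~\ref{relation_betn_mk_and_ml-k}(2) give that $\bm{m}^{(k)}$ has strictly between $0$ and $f$ zeros for $1 \leq k \leq l-1$ (with the single exception $\bm{m}^{(f)} = (1, \ldots, 1)$ when $f$ is even, which is still not $\mathbf{0}$). So neither $\bm{m}^{(k)}$ nor its cyclic shift $g\bm{m}^{(k)}$ equals $\mathbf{0}$, isolating $\bm{\mu}^{(l)}$ from every other tuple. Next, for $1 \leq k_1 < k_2 \leq l-1$, an equality $\bm{\mu}^{(k_1)} = \bm{\mu}^{(k_2)}$ would force $\bm{m}^{(k_1)} = \bm{m}^{(k_2)}$; Lemma~\ref{relation_betn_mk_and_ml-k}(2) then forces $k_1 + k_2 = l$ and, combined with (1), $g^{k_1}\bm{m}^{(k_1)} = \bm{m}^{(k_1)}$. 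Lemma~\ref{relation_betn_mk_and_ml-k}(3) now forbids this unless $k_1 = l/2$, contradicting $k_1 < k_2$. The same argument, after cancelling $g$, rules out $g\bm{\mu}^{(k_1)} = g\bm{\mu}^{(k_2)}$.

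For the mixed comparisons $\bm{\mu}^{(k_1)} = g\bm{\mu}^{(k_2)}$ with $1 \leq k_1, k_2 \leq l-1$, replaying this logic starting from the sign-pattern identity $\bm{m}^{(k_1)} = g\bm{m}^{(k_2)}$ pins $(k_1, k_2)$ down to one of $(l/2, l/2)$, $(1, l-1)$, or $(f+1, f-1)$ (the last two requiring $l = 2f$, hence $f$ even). The main obstacle lies here: in each of these three near-collisions the sign patterns actually coincide, so distinctness must be read off from the constants. I expect to run \eqref{inductiveformulae} at indices $j = 0, 1, 2$ for small $k$ to produce the handful of values $\mu^{(f)}_0 = p - 1 - x$, $\mu^{(f)}_1 = p - 3 - x$, $\mu^{(l-1)}_1 = x + 1$, $\mu^{(f-1)}_2 = x - 1$, together with the compositional identity $\mu^{(f+k)}_j(x) = \mu^{(k)}_j(\mu^{(f)}_j(x))$ valid for $l = 2f$, which lets me compute $\bm{\mu}^{(f+1)}$ from $\bm{\mu}^{(1)}$ and $\bm{\mu}^{(f)}$. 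Comparing one component in each of the three exceptional pairs then yields a constant discrepancy of $\pm 2$, which is nonzero modulo $p$ since $p > 2$, completing the proof.
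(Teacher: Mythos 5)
Your proposal is correct and takes essentially the same route as the paper: reduce via the sign-pattern invariants $\bm{m}^{(k)}$ and Lemma~\ref{relation_betn_mk_and_ml-k} to a short list of potential collisions $(1,l-1)$, $(f+1,f-1)$, $(f,f)$ (the latter two only for $f$ even), then rule each out by an explicit constant-term computation via \eqref{inductiveformulae}, the discrepancy being $\pm 2$. Two small points: the paper outsources the pairwise distinctness of $\bm\mu^{(1)},\ldots,\bm\mu^{(l)}$ to \cite[Lemma 1.4(2)]{she22} where you re-derive it (correct, just longer); and since the tuples live in $(\mathbb{Z}\pm x)^f$, the $\pm 2$ discrepancy is already nonzero over $\mathbb{Z}$, so no appeal to $p>2$ is needed.
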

\begin{proof}
By \cite[Lemma 1.4 (2)]{she22}, it is enough to prove that $\bm\mu^{(k_{1})}\neq g\bm\mu^{(k_{2})}$ for $1\leq k_{1},k_{2}\leq l-1$. If $\bm\mu^{(k_{1})}=g\bm\mu^{(k_{2})}$ for some $1\leq k_{1},k_{2}\leq l-1$, then we have $\bm{m}^{(k_{1})}=g\bm{m}^{(k_{2})}$ for the corresponding elements in $(\mathbb{Z}/2\mathbb{Z})^{f}$. We now find all the pairs $(k_{1},k_{2})$ satisfying $\bm{m}^{(k_{1})}=g\bm{m}^{(k_{2})}$. If $k_{1}=k_{2}=k$, then $\bm{m}^{(k)}=g\bm{m}^{(k)}$. By Lemma \ref{relation_betn_mk_and_ml-k} (3), it follows that  $f$ is even and $k=f=\frac{l}{2}$. If $k_{1}\neq k_{2}$, we use Lemma \ref{relation_betn_mk_and_ml-k} (1) and (2) to find that $\bm{m}^{(l-k_{1})}=g^{k_{1}-1}\bm{m}^{(l-k_{1})}$. By Lemma \ref{relation_betn_mk_and_ml-k} (3), $g^{k_{1}-1}$ must be the identity permutation. This gives $k_{1}=1$ (resp. $k_{1}=1$ or $\frac{l}{2}+1$) for odd (resp. even) $f$.
Therefore, the pairs $(k_{1},k_{2})$ satisfying $\bm{m}^{(k_{1})}=g\bm{m}^{(k_{2})}$ are
\begin{enumerate}
\item $(1,l-1)$ if $f$ is odd,
\item $(1,l-1)$, $(\frac{l}{2}+1,\frac{l}{2}-1)$, $(\frac{l}{2},\frac{l}{2})$ if $f$ is even.
\end{enumerate}
In Case (1), one checks using (\ref{inductiveformulae}) that $\mu^{(1)}_{0}(x)=x-1\neq x+1=\mu^{(l-1)}_{1}(x)$. Thus $\bm\mu^{(1)}\neq g\bm\mu^{(l-1)}$.
In Case (2), one checks using (\ref{inductiveformulae}) again that $\mu^{(1)}_{0}(x)=x-1\neq x+1=\mu^{(l-1)}_{1}(x)$ in the first subcase, $\mu^{(\frac{l}{2}+1)}_{1}(x)=x+1\neq x-1=\mu^{(\frac{l}{2}-1)}_{2}(x)$ in the second subcase, and $\mu^{(\frac{l}{2})}_{0}(x)=p-1-x\neq p-3-x=\mu^{(\frac{l}{2})}_{1}(x)$ in the third subcase.
\end{proof}
For $\bm\lambda=(\lambda_{0}(x),\ldots,\lambda_{f-1}(x))\in\left(\mathbb{Z}\pm x\right)^{f}$ and $\bm{r}\in\mathbb{Z}^{f}$, \[\bm\lambda(\bm{r}):=\left(\lambda_{0}(r_{0}),\lambda_{1}(r_{1}),\ldots,\lambda_{f-1}(r_{f-1})\right)\in\mathbb{Z}^{f}.\] Recall the linear polynomial $e(\bm\lambda)\in\mathbb{Z}[x_{0},x_{1},\ldots,x_{f-1}]$ associated to $\bm\lambda\in\left(\mathbb{Z}\pm x\right)^{f}$ in \cite[\S 2]{BP}: 
\begin{linenomath*}
	\[
	e(\bm{\lambda})(x_{0},\ldots,x_{f-1}):=
	\begin{cases}
	\frac{1}{2}\left(\sum\limits_{j=0}^{f-1}p^{j}(x_{j}-\lambda_{j}(x_{j}))\right) & \text{if $\lambda_{f-1}(x_{f-1})\in\lbrace x_{f-1},x_{f-1}-1\rbrace$}, \\
	\frac{1}{2}\left(p^{f}-1+\sum\limits_{j=0}^{f-1}p^{j}(x_{j}-\lambda_{j}(x_{j}))\right) & \text{otherwise}.
	\end{cases}
	\]
\end{linenomath*}
Now let $\bm{r}=(r_{0},r_{1},\ldots,r_{f-1})\in\mathbb{Z}^{f}$ such that $1\leq r_{j}\leq p-3$ for all $j$, and consider the following generic weights of $\Gamma$ \begin{align*}
\sigma_{k}:=\bm\mu^{(k)}(\bm{r})\otimes\mathrm{det}^{e_{k}(\bm{r})}\hspace{2mm}\text{for all $0\leq k\leq l$,}
\end{align*} where \[
e_{0}(\bm{r}):=0\hspace{2mm}\text{and}\hspace{2mm}e_{k}(\bm{r}):=\sum_{j=0}^{k-1}e(g^{j}\bm\mu)(\bm\mu^{(j)}(\bm{r}))\hspace{2mm}\text{for all $1\leq k\leq l$}.\] 
It is shown in \cite[Lemma 1.4 and Theorem 1.6]{she22} that $\sigma_{l}=\sigma_{0}=\bm{r}$, $E(\sigma_{k},\sigma_{k-1}^{[s]})$ exists for all $1\leq k\leq l$, and $E(\sigma_{k},\sigma_{k-1}^{[s]})^{U}=\chi(\sigma_{k})\oplus\chi(\sigma_{k-1})^{s}$ for all $1\leq k\leq l$. In other words, $C:=\bigoplus_{k=1}^{l}E(\sigma_{k},\sigma_{k-1}^{[s]})$ is a \emph{cyclic module} of $\Gamma$ (see \cite[Definition 1.1]{she22}). Permuting the $f$-tuples of $\sigma_{k}$'s by the application of $g\in S_{f}$, we obtain another cyclic module of $\Gamma$. Indeed, let \begin{align*}
\sigma_{k}':=(g\bm\mu^{(k)})(\bm{r})\otimes\mathrm{det}^{e'_{k}(\bm{r})}
\hspace{2mm}\text{for all $0\leq k\leq l$},\hspace{2mm}
\end{align*} where \[e_{0}'(\bm{r}):=0\hspace{2mm}\text{and}\hspace{2mm}e_{k}'(\bm{r}):=\sum_{j=0}^{k-1}e(g^{j+1}\bm\mu)((g\bm\mu^{(j)})(\bm{r}))\hspace{2mm} \text{for all $1\leq k\leq l$}.\] 

\begin{lemma}\label{C'_is_cyclic_lemma} For all $1\leq k\leq l$, $E(\sigma_{k}',\sigma_{k-1}'^{[s]})$ exists, and $C':=\bigoplus_{k=1}^{l}E(\sigma'_{k},\sigma_{k-1}'^{[s]})$ is a multiplicity-free cyclic module of $\Gamma$. 
\end{lemma}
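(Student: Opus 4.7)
The plan is to reduce the statement to \cite[Theorem 1.6]{she22}, which establishes the analogous claim for the cyclic module $C$ built from $\bm\mu$. The starting observation is the functoriality $g(\bm\lambda\circ\bm\lambda')=g\bm\lambda\circ g\bm\lambda'$ of the cyclic shift $g$ with respect to composition of tuples. Iterating this identity shows that $g\bm\mu^{(k)}$ is precisely the $k$-th iterate when the recursive construction of \cite[\S 1]{she22} is applied starting from the shifted tuple $g\bm\mu$ in place of $\bm\mu$. In this sense, $\sigma'_k$ and $e'_k(\bm r)$ are the natural analogs of $\sigma_k$ and $e_k(\bm r)$ for the shifted starting point, and $C'$ is the corresponding candidate cyclic module.

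Next I would verify that the proofs of \cite[Lemma 1.4, Theorem 1.6]{she22} go through verbatim with $g\bm\mu$ in place of $\bm\mu$. The recursive rule (\ref{inductiveformulae}) is equivariant under cyclic permutation of the index $j$; running it from $g\bm\mu$ simply shifts the ``special'' coordinates from $j\equiv 1-k,\,2-k \pmod{f}$ to $j\equiv 2-k,\,3-k \pmod{f}$. All local computations used in \cite{she22}—in particular the existence of each nonsplit extension $E(\sigma'_k,\sigma_{k-1}'^{[s]})$ and the identification $E(\sigma'_k,\sigma_{k-1}'^{[s]})^U=\chi(\sigma'_k)\oplus\chi(\sigma'_{k-1})^s$—depend only on these local transition types and thus transfer. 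A conceptually cleaner way to phrase the same argument is that $C'$ is obtained from $C$ by a Frobenius twist (which on tuples is exactly the action of $g$), and the defining formula for $e'_k(\bm r)$ is arranged precisely so that the $\det$-exponent transforms compatibly. Either viewpoint upgrades $C'$ to a cyclic module in the sense of \cite[Definition 1.1]{she22}.

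For multiplicity-freeness, I would combine Lemma \ref{multiplicity_free_lemma} with the genericity hypothesis $1\leq r_j\leq p-3$. Lemma \ref{multiplicity_free_lemma} gives pairwise distinctness of the tuples $g\bm\mu^{(k)}$ for $0\le k\le l-1$ inside $(\mathbb{Z}\pm x)^f$; the bounds on $\bm r$ force the evaluations $(g\bm\mu^{(k)})(\bm r)$ to lie in $[0,p-1]^f$ and faithfully record the polynomial tuple, so the $\sigma'_k$ are distinct as generic weights. Since $\sigma\mapsto\sigma^{[s]}$ is a bijection on generic weights, the $\sigma_{k}'^{[s]}$ are also pairwise distinct, and the explicit description $\sigma^{[s]}=(p-1-r_0,\ldots,p-1-r_{f-1})\otimes\det^{m+r}$ allows one to check directly that no $\sigma'_k$ coincides with any $\sigma_{k'}'^{[s]}$ (the $\bm r$-parts lie in complementary ranges determined by the leading signs recorded in $\bm m^{(k)}$).

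The main obstacle, in my view, is the careful bookkeeping needed to certify that the Frobenius-twisted definition of $e'_k(\bm r)$ truly gives the correct $\det$-exponents making the shifted proofs of \cite{she22} go through; this is routine but tedious, and requires unwinding how the linear forms $e(g^{j+1}\bm\mu)$ interact with the iterated compositions $g\bm\mu^{(j)}$.
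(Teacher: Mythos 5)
Your existence and cyclicity argument runs parallel to the paper's. The paper verifies directly, via the first graded piece of the cosocle filtration of $\mathrm{Ind}^\Gamma_B\chi(\sigma'_{k-1})^s$ and the identity $g\bm\mu^{(k)}=g^k\bm\mu\circ g\bm\mu^{(k-1)}$, that each $E(\sigma'_k,\sigma_{k-1}'^{[s]})$ exists and picks up the right $U$-invariants; you instead invoke the equivariance $g(\bm\lambda\circ\bm\lambda')=g\bm\lambda\circ g\bm\lambda'$ to transport the corresponding statements from \cite{she22} applied to the shifted seed $g\bm\mu$. These are two phrasings of the same shift symmetry, and both are fine, with the paper being slightly more explicit. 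You also correctly flag the $\det$-exponent bookkeeping (the paper handles it by pointing to the argument of \cite[Lemma 1.4(3)]{she22} to get $e'_l(\bm r)\equiv 0$, hence $\sigma'_l=\sigma'_0$).

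Where your proposal has a genuine gap is the multiplicity-freeness. You deduce it from Lemma~\ref{multiplicity_free_lemma} plus the claim that the evaluations $(g\bm\mu^{(k)})(\bm r)$ ``faithfully record the polynomial tuple.'' That claim is false in general: two distinct tuples in $(\mathbb{Z}\pm x)^f$ can evaluate to the same integer tuple at a particular $\bm r$ (a polynomial $x+c$ and $-x+c'$ agree at $x=(c'-c)/2$), and nothing in the stated constraint $1\le r_j\le p-3$ rules this out a priori. Moreover, distinctness of the socle weights $\sigma'_k$ does not on its own give multiplicity-freeness of $C'$: one must also compare the socle weights against the cosocle weights $\sigma_{k'}'^{[s]}$ (which carry an extra $\det$-twist by $\bm r$) and against each other, and your appeal to ``complementary ranges determined by the leading signs'' is not a proof. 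The paper sidesteps all of this by observing that $C'$ is multiplicity-free because $C$ is. Interestingly, your own ``conceptually cleaner'' Frobenius-twist observation already delivers this: $C'$ at parameter $\bm r$ is the twist of $C$ at parameter $g^{-1}\bm r$, and since $g^{-1}\bm r$ is again generic, multiplicity-freeness of $C$ (from \cite[Theorem 1.6]{she22}) transports over. You should use that route rather than the direct evaluation argument, which as written does not close.
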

\begin{proof}
The arguments similar to those in the proof of \cite[Lemma 1.4 (3)]{she22} show that the integer $e'_{l}(\bm{r})$ is independent of $\bm{r}$ and is $0$ modulo $p^{f}-1$. Thus $\sigma'_{l}=\sigma'_{0}=\bm{r}$. Now the first graded piece  $\mathrm{gr^{1}_{cosoc}}(\mathrm{Ind}^{\Gamma}_{B}\chi(\sigma_{k-1}')^{s})$ of the cosocle filtration of $\mathrm{Ind}^{\Gamma}_{B}\chi(\sigma_{k-1}')^{s}$ is \[\bigoplus_{i=0}^{f-1}(g^{i}\bm\mu)(( g\bm\mu^{(k-1)})(\bm{r}))\otimes\mathrm{det}^{(g^{i}\bm\mu)(( g\bm\mu^{(k-1)})(\bm{r}))}\mathrm{det}^{e'_{k-1}(\bm{r})}.\] So, $g\bm\mu^{(k)}=g^{k}\bm\mu\circ g\bm\mu^{(k-1)}$ implies that $\sigma_{k}'\subseteq\mathrm{gr^{1}_{cosoc}}(\mathrm{Ind}^{\Gamma}_{B}\chi(\sigma_{k-1}')^{s})$ for all $1\leq k\leq l$. As a result, $E(\sigma_{k}',\sigma_{k-1}'^{[s]})$ exists for all $k$, and $E(\sigma_{k}',\sigma_{k-1}'^{[s]})^{U}=\chi(\sigma_{k}')\oplus\chi(\sigma_{k-1}')^{s}$. As the $f$-tuples $\{g\bm\mu^{(1)},g\bm\mu^{(2)},\ldots,g\bm\mu^{(l)}\}$ are all distinct, it follows that $C'$ is a cyclic module. The multiplicity-freeness  of $C$ implies that $C'$ is also multiplicity-free.
\end{proof}

Let $\sigma:=\sigma_{l}=\sigma_{l}'$ and $\sigma^{[s]}:=\sigma_{l}^{[s]}=\sigma_{l}'^{[s]}$. Note that $\sigma$ (resp. $\sigma^{[s]}$) occurs with multiplicity two in the socle (resp. cosocle) of $C\oplus C'$ while all the other socle (resp. cosocle) weights occur with multiplicity one by Lemma \ref{multiplicity_free_lemma}. We construct a certain subquotient of $C\oplus C'$ by splicing $C$ and $C'$ together along $\sigma$ and $\sigma^{[s]}$. The resulting spliced module will have multiplicity-free socle and cosocle. 

Let $\iota_{\sigma}$ and $\iota_{\sigma^{[s]}}$ be the compositions \[\sigma\xhookrightarrow{\Delta}\sigma\oplus\sigma\xhookrightarrow{}\mathrm{soc}_{\Gamma}\left(C\oplus C'\right)\hspace{3mm}\mathrm{and}\hspace{3mm}\sigma^{[s]}\xhookrightarrow{\Delta}\sigma^{[s]}\oplus\sigma^{[s]}\xhookrightarrow{}\mathrm{cosoc}_{\Gamma}\left(C\oplus C'\right)\hspace{3mm}\mathrm{respectively,}\] where the first map $\Delta$ in both is the diagonal embedding and the second map in both is the natural inclusion. As the cyclic modules $C$ and $C'$ are individually multiplicity-free (Lemma \ref{C'_is_cyclic_lemma}), $\sigma\not\in\mathrm{cosoc}_{\Gamma}\left(C\oplus C'\right)$ and $\sigma^{[s]}\not\in\mathrm{soc}_{\Gamma}\left(C\oplus C'\right)$. Thus, one has the following short exact sequence of $\Gamma$-modules \[0\longrightarrow\sigma\oplus\left(\bigoplus_{k=1}^{l-1}\sigma_{k}\oplus\sigma_{k}'\right)\longrightarrow\frac{C\oplus C'}{\iota_{\sigma}(\sigma)}\longrightarrow\mathrm{cosoc}_{\Gamma}\left(C\oplus C'\right)\longrightarrow 0.\] Define the spliced module $D_{0}$ to be the submodule of $\frac{C\oplus C'}{\iota_{\sigma}(\sigma)}$ that sits in the following short exact sequence \begin{equation}\label{D0_defn}
0\longrightarrow\sigma\oplus\left(\bigoplus_{k=1}^{l-1}\sigma_{k}\oplus\sigma_{k}'\right)\longrightarrow D_{0}\longrightarrow\iota_{\sigma^{[s]}}(\sigma^{[s]})\oplus\left(\bigoplus_{k=1}^{l-1}\sigma_{k}^{[s]}\oplus\sigma_{k}'^{[s]}\right)\longrightarrow 0.
\end{equation} 

The Hasse diagram of the cosocle filtration of $D_{0}$ looks as follows:
\begin{center}
\begin{tabular}{m{.6cm}m{.3cm}m{.3cm}m{.3cm}m{.5cm}m{.1cm}m{4cm}m{.1cm}m{.5cm}m{.3cm}m{.3cm}m{.1cm}m{.5cm}}
$\xymatrix{\sigma_{l-2}^{[s]}\ar@{-}[d] \\ \sigma_{l-1}}$ & \vspace{1.6cm}$\bigoplus$ & \vspace{1.6cm}$\ldots$ & \vspace{1.6cm}$\bigoplus$ & $\xymatrix{\sigma_{1}^{[s]}\ar@{-}[d] \\ \sigma_{2}}$ & \vspace{1.6cm}$\bigoplus$ & $\xymatrix{\sigma_{l-1}^{[s]}\ar@{-}[dr] & \sigma^{[s]}\ar@{-}[dl]\ar@{-}[dr] & \sigma_{l-1}'^{[s]}\ar@{-}[dl] \\ \sigma_{1} & \sigma & \sigma_{1}'}$ & \vspace{1.6cm}$\bigoplus$ & $\xymatrix{\sigma_{1}'^{[s]}\ar@{-}[d] \\ \sigma_{2}'}$ & \vspace{1.6cm}$\bigoplus$ & \vspace{1.6cm}$\ldots$ & \vspace{1.6cm}$\bigoplus$ & $\xymatrix{\sigma_{l-2}'^{[s]}\ar@{-}[d] \\ \sigma_{l-1}'}$
\end{tabular}
\end{center}
Notice that $D_{0}$ is a direct sum of $2(l-2)$ non-split extensions and two indecomposable modules of length 3 shown in the middle of the above diagram. Of these two indecomposable modules, let us denote the one with socle $\sigma$ by $M(\sigma)$ and the other one with cosocle $\sigma^{[s]}$ by $M(\sigma^{[s]})$. The module $M(\sigma)$ is a quotient of $E(\sigma,\sigma_{l-1}^{[s]})\oplus E(\sigma,\sigma_{l-1}'^{[s]})$ such that the natural surjection $E(\sigma,\sigma_{l-1}^{[s]})\oplus E(\sigma,\sigma_{l-1}'^{[s]})\twoheadrightarrow M(\sigma)$ restricted to individual extensions is an isomorphism. Similarly, the module $M(\sigma^{[s]})$ is a submodule of $E(\sigma_{1},\sigma^{[s]})\oplus E(\sigma_{1}',\sigma^{[s]})$ such that the natural maps $\frac{M(\sigma^{[s]})}{M(\sigma^{[s]})\cap E(\sigma_{1},\sigma^{[s]})}\rightarrow E(\sigma_{1}',\sigma^{[s]})$ and $\frac{M(\sigma^{[s]})}{M(\sigma^{[s]})\cap E(\sigma_{1}',\sigma^{[s]})}\rightarrow E(\sigma_{1},\sigma^{[s]})$ are isomorphisms. 
\begin{remark} Though the socle and the cosocle of $D_{0}$ are multiplicity-free by construction, $D_{0}$ need not be multiplicity-free. For example, when $f=2$, the weight $(p-2-r_{0},r_{1}+1)\otimes\mathrm{det}^{r_{0}+p(p-1)}$ occurs in the socle of $C$ as well as in the cosocle of $C'$.
\end{remark}

Let $D_{1}:=D_{0}^{U}$, $S_{1}:=\left(\mathrm{soc}_{\Gamma}D_{0}\right)^{U}$, and $Q_{1}:=\left(\mathrm{cosoc}_{\Gamma}D_{0}\right)^{U}$. The $B$-representations $S_{1}$ and $Q_{1}$ are multiplicity-free, i.e., for a $B$-character $\chi$, we have $\mathrm{dim}_{\overline{\mathbb{F}}_{p}}S_{1}^{\chi}\leq 1$ and $\mathrm{dim}_{\overline{\mathbb{F}}_{p}}Q_{1}^{\chi}\leq 1$.
 
\begin{lemma}\label{D1_characters_lemma} As $B$-representations, \[D_{1}=S_{1}\oplus Q_{1}=\chi(\sigma)\oplus\chi(\sigma)^{s}\oplus\left(\bigoplus_{k=1}^{l-1}\chi(\sigma_{k})\oplus\chi(\sigma_{k}')\oplus\chi(\sigma_{k})^{s}\oplus\chi(\sigma_{k}')^{s}\right).\] Thus, for a $B$-character $\chi$, $\mathrm{dim}_{\overline{\mathbb{F}}_{p}}S_{1}^{\chi}=1$ if and only if $\mathrm{dim}_{\overline{\mathbb{F}}_{p}}Q_{1}^{\chi^{s}}=1$.
\end{lemma}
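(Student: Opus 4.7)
The plan is to compute $D_{0}^{U}$ explicitly by decomposing $D_{0}$ into its indecomposable summands---the two length-$3$ modules $M(\sigma)$ and $M(\sigma^{[s]})$ together with the $2(l-2)$ non-split length-$2$ extensions $E(\sigma_{k},\sigma_{k-1}^{[s]})$ and $E(\sigma'_{k},\sigma'^{[s]}_{k-1})$ for $2\le k\le l-1$, as read off from the Hasse diagram preceding the lemma---and then applying $(-)^{U}$ summand by summand.

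For the $2(l-2)$ length-$2$ extensions there is nothing new to prove: the formula $E(\sigma_{k},\sigma_{k-1}^{[s]})^{U}=\chi(\sigma_{k})\oplus\chi(\sigma_{k-1})^{s}$ (and its primed analogue) is exactly the one recalled from \cite[Theorem 1.6]{she22} in the paragraph before Lemma \ref{C'_is_cyclic_lemma}. The real work is in treating $M(\sigma^{[s]})$ and $M(\sigma)$. For $M(\sigma^{[s]})$, realised before the lemma as the fibre product of $E(\sigma_{1},\sigma^{[s]})$ and $E(\sigma'_{1},\sigma^{[s]})$ over $\sigma^{[s]}$, I would start from the $4$-dimensional $U$-invariants $\chi(\sigma_{1})\oplus\chi(\sigma)^{s}\oplus\chi(\sigma'_{1})\oplus\chi(\sigma)^{s}$ of the ambient module and impose the fibre-product condition (equality of the two $\chi(\sigma)^{s}$ components), obtaining $M(\sigma^{[s]})^{U}=\chi(\sigma_{1})\oplus\chi(\sigma'_{1})\oplus\chi(\sigma)^{s}$. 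For $M(\sigma)$, realised as the pushout of $E(\sigma,\sigma_{l-1}^{[s]})$ and $E(\sigma,\sigma'^{[s]}_{l-1})$ along $\sigma$, I would apply the left-exact functor $(-)^{U}$ to the short exact sequence $0\to\sigma\to E(\sigma,\sigma_{l-1}^{[s]})\oplus E(\sigma,\sigma'^{[s]}_{l-1})\to M(\sigma)\to 0$; the image of the $4$-dimensional middle space in $M(\sigma)^{U}$ then has dimension $4-1=3$ and contains the three pairwise distinct characters $\chi(\sigma)$, $\chi(\sigma_{l-1})^{s}$, $\chi(\sigma'_{l-1})^{s}$. The length-$3$ bound $\dim M(\sigma)^{U}\le 3$ forces equality, so $M(\sigma)^{U}=\chi(\sigma)\oplus\chi(\sigma_{l-1})^{s}\oplus\chi(\sigma'_{l-1})^{s}$.

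Assembling all contributions yields the displayed decomposition of $D_{1}$. The factorisation $D_{1}=S_{1}\oplus Q_{1}$ and the final bi-implication are immediate from the manifest $\chi\leftrightarrow\chi^{s}$ symmetry between the non-$s$-twisted half, which is exactly $(\mathrm{soc}_{\Gamma}D_{0})^{U}=S_{1}$, and the $s$-twisted half, which is exactly $(\mathrm{cosoc}_{\Gamma}D_{0})^{U}=Q_{1}$. The only genuine obstacle is ruling out spurious $U$-invariants inside $M(\sigma^{[s]})$ and $M(\sigma)$---in particular showing that $\chi(\sigma)^{s}$ lifts only diagonally to $M(\sigma^{[s]})^{U}$ and that $\chi(\sigma)$ appears with multiplicity one (not two) in $M(\sigma)^{U}$---but these points are handled cleanly by the explicit pullback/pushout descriptions together with the length bound.
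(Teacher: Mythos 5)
Your argument is correct, and it takes a genuinely different route from the paper's. You decompose $D_{0}$ into its indecomposable $\Gamma$-summands (as listed off the Hasse diagram) and compute $U$-invariants summand by summand: the known formula for the $2(l-2)$ length-two extensions, a fibre-product computation for $M(\sigma^{[s]})$, and a pushout-plus-length-bound computation for $M(\sigma)$. The paper instead runs a pure dimension count through the intermediate module $\frac{C\oplus C'}{\iota_{\sigma}(\sigma)}$: from $\dim(C\oplus C')^{U}=4l$ and left-exactness of $(-)^{U}$ one gets $\dim\bigl(\frac{C\oplus C'}{\iota_{\sigma}(\sigma)}\bigr)^{U}\geq 4l-1$, and since the length of that module is $4l-1$ this must be an equality; then the short exact sequence $0\to D_{0}\to\frac{C\oplus C'}{\iota_{\sigma}(\sigma)}\to\sigma^{[s]}\to 0$ together with the bound $\dim D_{1}\leq\operatorname{length}D_{0}=4l-2$ pins down $\dim D_{1}=4l-2$, which forces the displayed decomposition. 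The paper's argument is shorter and avoids touching the indecomposable structure at all; yours is more explicit and records exactly which characters come from which summand, which can be convenient if one wants to track the $B$-characters through the later $\Pi$-action arguments. One small point worth making explicit in your version: when you say "impose the fibre-product condition (equality of the two $\chi(\sigma)^{s}$ components)" for $M(\sigma^{[s]})$, you are implicitly using that each $\chi(\sigma)^{s}$-line in $E(\sigma_{1},\sigma^{[s]})^{U}$ and $E(\sigma'_{1},\sigma^{[s]})^{U}$ surjects onto $(\sigma^{[s]})^{U}$ (it cannot lie in $\sigma_{1}^{U}$ or $\sigma_{1}'^{U}$, which are one-dimensional of a different character); this is what makes the cut-down by one dimension.
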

\begin{proof}
The second part follows from the first part and the discussion before the lemma. The first part is equivalent to the claim that $\mathrm{dim}_{\overline{\mathbb{F}}_{p}}D_{1}=4l-2$ because $D_{0}$, by definition, has length $4l-2$ (\ref{D0_defn}). Note that $\mathrm{dim}_{\overline{\mathbb{F}}_{p}}(C\oplus C')^{U}=4l$ implies that $\mathrm{dim}_{\overline{\mathbb{F}}_{p}}\left(\frac{C\oplus C'}{\iota_{\sigma}(\sigma)}\right)^{U}\geq 4l-1$. However, the $\Gamma$-module $\frac{C\oplus C'}{\iota_{\sigma}(\sigma)}$ has length $4l-1$. Hence $\mathrm{dim}_{\overline{\mathbb{F}}_{p}}\left(\frac{C\oplus C'}{\iota_{\sigma}(\sigma)}\right)^{U}=4l-1$. Since $D_{0}$ sits in the short exact sequence \[0\longrightarrow D_{0}\longrightarrow\frac{C\oplus C'}{\iota_{\sigma}(\sigma)}\longrightarrow\sigma^{[s]}\longrightarrow 0\] and the functor of $U$-invariants is left exact, we have  \[\mathrm{dim}_{\overline{\mathbb{F}}_{p}}D_{1}+\mathrm{dim}_{\overline{\mathbb{F}}_{p}}\mathrm{Im}\left(\left(\frac{C\oplus C'}{\iota_{\sigma}(\sigma)}\right)^{U}\rightarrow\left(\sigma^{[s]}\right)^{U}\right)=\mathrm{dim}_{\overline{\mathbb{F}}_{p}}\left(\frac{C\oplus C'}{\iota_{\sigma}(\sigma)}\right)^{U}=4l-1.\] As $\mathrm{dim}_{\overline{\mathbb{F}}_{p}}\mathrm{Im}\left(\left(\frac{C\oplus C'}{\iota_{\sigma}(\sigma)}\right)^{U}\rightarrow\left(\sigma^{[s]}\right)^{U}\right)\leq 1$ and $\mathrm{dim}_{\overline{\mathbb{F}}_{p}}D_{1}\leq 4l-2$, the claim follows.
\end{proof}

\begin{remark}
	We remark that one can work with any two cyclic modules of $\Gamma$ arising from two different cyclic permutations of $\bm{\mu}$ to form a spliced module $D_{0}$ (see \cite[Remark 1.7]{she22}). 
\end{remark}

\begin{remark}
    Recently, M. Schein \cite{sch22} constructed interesting \emph{cyclic} diagrams built out of principal series to construct irreducible admissible supercuspidal representations of $G$ with $K$-socles compatible with Serre's weight conjecture in the ramified setting.
\end{remark}

\section{Infinite-dimensional irreducible diagram}

To construct diagrams in the sense of \cite[\S 9]{BP}, equip the spliced module $D_{0}$ with a smooth $KZ$-action via $KZ\twoheadrightarrow\Gamma$ such that $\varpi$ acts trivially. Equip $D_{1}$ with a smooth $N$-action by defining the action of $\Pi$ to be a linear automorphism of order 2 that maps $S_{1}^{\chi}$ to $Q_{1}^{\chi^{s}}$ for all $I$-characters $\chi$ such that $S_{1}^{\chi}\neq 0$ (see Lemma \ref{D1_characters_lemma}). This gives rise to a basic $0$-diagram $(D_{0},D_{1},\mathrm{can})$  where $\mathrm{can}:D_{1}\hookrightarrow D_{0}$ is the canonical inclusion (see \cite[Introduction, page 3]{BP} for the definition of a basic $0$-diagram). It is easy to see that the diagram $(D_{0},D_{1},\mathrm{can})$ is irreducible.

Let $D_{0}(\infty):=\bigoplus_{i\in\mathbb{Z}}D_{0}(i)$ be the smooth $KZ$-representation with component-wise $KZ$-action, where there is a fixed 
isomorphism $D_{0}(i)\cong D_{0}$ of $KZ$-representations for every $i\in\mathbb{Z}$. Following \cite{Le}, we denote the natural inclusion 
$D_{0}\iso D_{0}(i)\hookrightarrow D_{0}(\infty)$ by $\iota_{i}$, and write $v_{i}:=\iota_{i}(v)$ for $v\in D_{0}$ for every $i\in\mathbb{Z}$. 
Let $D_{1}(\infty):=D_{0}(\infty)^{I(1)}\cong\bigoplus_{i\in\mathbb{Z}}(S_{1}\oplus Q_{1})$. We define a $\Pi$-action on $D_{1}(\infty)$ as follows. 
Let $\lambda=(\lambda_{i})\in\prod_{i\in\mathbb{Z}}\overline{\mathbb{F}}_{p}^{\times}$. For all integers $i\in\mathbb{Z}$, define
\begin{equation*}
\Pi v_{i}:=
\begin{cases}
\lambda_{i}(\Pi v)_{i} &\text{if $v\in S_{1}^{\chi(\sigma)}$,}\\
(\Pi v)_{i-1} &\text{if $v\in S_{1}^{\chi(\sigma_{1})}$,}\\
(\Pi v)_{i+1} &\text{if $v\in S_{1}^{\chi(\sigma_{1}')}$,}\\
(\Pi v)_{i} &\text{if $v\in S_{1}^{\chi}$ for $\chi\in\{\chi(\sigma_{2}),\ldots ,\chi(\sigma_{l-1}),\chi(\sigma_{2}'),\ldots ,\chi(\sigma_{l-1}')\}$.}
\end{cases}
\end{equation*} 
This uniquely determines a smooth $N$-action on $D_{1}(\infty)$ such that $\varpi=\Pi^{2}$ acts trivially on it. Thus we get a basic $0$-diagram 
$D(\lambda):=(D_{0}(\infty),D_{1}(\infty),\mathrm{can})$ with the above actions where can is the canonical inclusion $D_{1}(\infty)\hookrightarrow D_{0}(\infty)$. 

\begin{proposition}\label{irreducible_diagram_prop} If $\lambda_{i-1}\lambda_{i}\neq\lambda_{-1}\lambda_{0}$ for all $i\neq 0$, then the basic $0$-diagram $D(\lambda)$ is irreducible. 
\end{proposition}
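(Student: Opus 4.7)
My plan is to show that any nonzero sub-diagram $(D_0', D_1')$ of $D(\lambda)$ necessarily coincides with $(D_0(\infty), D_1(\infty))$. Since $I(1)$ is pro-$p$ and $D_0'$ is a smooth $\overline{\mathbb{F}}_p$-representation, $D_0'^{I(1)} = D_0' \cap D_1(\infty)$ is nonzero, and by the multiplicity-freeness of $D_1(\infty)$ as a $B$-module (Lemma~\ref{D1_characters_lemma}) it contains a nonzero $B$-character eigenvector. By composing with $\Pi$ and appropriate $\Gamma$-elements to traverse between the indecomposable summands of $D_0$ via the cyclic chain $\sigma \rightsquigarrow M(\sigma^{[s]}) \rightsquigarrow E(\sigma_2, \sigma_1^{[s]}) \rightsquigarrow \cdots \rightsquigarrow M(\sigma)$, one may assume the eigenvector lies in the $\chi(\sigma)$-isotypic subspace $V := \bigoplus_{j \in \mathbb{Z}} \overline{\mathbb{F}}_p (v_\chi)_j \subseteq D_1(\infty)$, where $v_\chi \in S_1^{\chi(\sigma)}$ is the distinguished generator of the socle $\sigma$ of $M(\sigma)$.

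Next I would isolate two \emph{journey operators} $L, R$ acting on $V$, induced from the sub-diagram structure. The operator $L$ arises from the composition: apply $\Pi$ to a $\chi(\sigma)$-eigenvector (which, uniquely among the characters in $S_1$, scales by $\lambda_j$), then use a $\Gamma$-element to land at the $\chi(\sigma_1)$-eigenvector $u_1$ in the socle of $M(\sigma^{[s]})$, then apply $\Pi$ (the unique shift by $-1$ in the chain, coming from $\chi(\sigma_1)$), then traverse the left chain $E(\sigma_2, \sigma_1^{[s]}) \to \cdots \to E(\sigma_{l-1}, \sigma_{l-2}^{[s]}) \to M(\sigma)$ via alternating $\Gamma$-actions and $\Pi$'s (all shift-free and scale-free), and finally return via a $\Gamma$-element to $\sigma$-socle at level $j-1$. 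A direct computation yields $L(v_\chi)_j = \alpha_L \lambda_j (v_\chi)_{j-1}$ for a nonzero constant $\alpha_L$, and analogously $R(v_\chi)_j = \alpha_R \lambda_j (v_\chi)_{j+1}$ via the primed chain. Both $L$ and $R$ preserve any sub-diagram-stable subspace of $V$, so the algebra they generate acts on $D_0' \cap V$.

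The heart of the argument is to reduce the support of a nonzero $w = \sum_{j \in S} c_j (v_\chi)_j \in D_0' \cap V$ of minimal finite support $S$. For $|S| \geq 2$, I would exploit the diagonal operators $LR, RL$, and their iterates $L^aR^a$, which act on $(v_\chi)_j$ by the scalars $\alpha_L\alpha_R \lambda_j\lambda_{j+1}$, $\alpha_L\alpha_R \lambda_j\lambda_{j-1}$, and $(\alpha_L\alpha_R)^a \lambda_j\lambda_{j+a}\prod_{k=1}^{a-1}\lambda_{j+k}^2$ respectively. Applying such a diagonal operator to $w$ and subtracting a scalar multiple of $w$ chosen to kill the coefficient at some distinguished index $j_\ast \in S$ produces a new element of $D_0' \cap V$ with support strictly smaller than $S$, provided the relevant diagonal eigenvalues do not all coincide on $S$. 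The condition $\lambda_i \neq \pm \lambda_0$ for $i \neq 0$, together with the identity $\Pi^2 = 1$ on $D_1(\infty)$ (which forces $\Pi$ to act on $\chi(\sigma)^s$-eigenvectors by the inverse scaling $\lambda_j^{-1}$), is exactly what is needed to preclude a simultaneous equality of all these eigenvalues at $0$ and at any other $i \in S$; the $\pm$ reflects the fact that a hypothetical common scaling ratio $r$ obeying the relations forced by the algebra of $L, R$ would have to satisfy $r^2 = 1$. Iterating, we arrive at $|S| = 1$, so $(v_\chi)_{j_0} \in D_0'$ for some $j_0$.

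Finally, from a single $(v_\chi)_{j_0}$ the $KZ$-action regenerates $\sigma_{j_0}$, and then $\Pi$ followed by the $KZ$-orbit of $(\Pi v_\chi)_{j_0}$ recovers all of $M(\sigma^{[s]})_{j_0}$ (since its $\chi(\sigma)^s$-eigenvector is a $\Gamma$-cyclic generator). Iterating the $L$- and $R$-journeys and collecting intermediate $KZ$-orbits along the left and primed chains reconstructs every indecomposable summand of $D_0(j)$ for every $j \in \mathbb{Z}$, so $D_0' = D_0(\infty)$ and $D_1' = D_1(\infty)$. The main obstacle in this plan is the support-separation argument: one must carefully track the algebra of diagonal operators on $V$ generated by iterates of $L$ and $R$ and verify that, combinatorially, the hypothesis $\lambda_i \neq \pm \lambda_0$ forecloses all ways the scalars attached to different indices could coincide simultaneously across this algebra.
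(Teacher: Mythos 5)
Your plan follows the same route as the paper's: start from a minimal-support element of the sub-diagram's $I(1)$-invariants lying on the $\chi(\sigma)$-isotypic line, introduce the two ``journey'' operators $L,R$ obtained by running once around the two loops of the spliced module, and use their diagonal combinations to cut down the support; then regenerate everything from a single summand. Your operator formulas are in fact the correct ones: each loop picks up the scaling $\lambda_j$ only once, at the $\chi(\sigma)$-step, so $L(v_\chi)_j = \alpha_L\lambda_j(v_\chi)_{j-1}$, $R(v_\chi)_j=\alpha_R\lambda_j(v_\chi)_{j+1}$, and therefore $RL(v_\chi)_j=\alpha_L\alpha_R\lambda_{j-1}\lambda_j(v_\chi)_j$ and $LR(v_\chi)_j=\alpha_L\alpha_R\lambda_j\lambda_{j+1}(v_\chi)_j$. (Note that the paper's displayed equations at this step record the coefficient $\lambda_i^2c_i$ after two journeys, but under the $\Pi$-action as defined, the composite loop collects the product $\lambda_{i-1}\lambda_i$ or $\lambda_i\lambda_{i+1}$, not $\lambda_i^2$; your bookkeeping is more careful.)

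The gap is exactly where you flag it, and it is a genuine one, not a routine verification. With the correct coefficient $\lambda_{i-1}\lambda_i$, subtracting $(\lambda_{-1}\lambda_0)w$ from $RLw$ kills the index-$0$ term, but the hypothesis $\lambda_i\neq\pm\lambda_0$ does not directly give $\lambda_{i-1}\lambda_i\neq\lambda_{-1}\lambda_0$, so the remaining coefficients could a priori all vanish too, and minimality would not be contradicted. Your heuristic about a ``common scaling ratio $r$ with $r^2=1$'' does not resolve this. One way to close the step: if no balanced word in $L,R$ separates indices $0$ and $i\in S$ ($i\neq 0$), then taking ratios of consecutive $L^kR^k$- and $R^kL^k$-eigenvalues yields $\lambda_j\lambda_{j+1}=\lambda_{i+j}\lambda_{i+j+1}$ for every $j\in\mathbb{Z}$; setting $h(j):=\lambda_j/\lambda_{i+j}$, this says $h(j)h(j+1)=1$, hence $h(j)=h(0)^{(-1)^j}$. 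Unwinding gives $\lambda_{2i}=\lambda_0$ if $i$ is odd, and $\lambda_{ni}=\lambda_0h(0)^{-n}$ if $i$ is even; since $h(0)\in\overline{\mathbb{F}}_p^{\times}$ has finite order $d$, one gets $\lambda_{di}=\lambda_0$. Either way $\lambda_m=\lambda_0$ for some $m\neq 0$, contradicting the hypothesis. Until something along these lines is supplied, the support-reduction step — and hence the proof — is incomplete. (The reduction to the $\chi(\sigma)$-line and the final regeneration of $D_0(\infty)$ from a single $(v_\chi)_{j_0}$ are fine and match the paper; also, $D_1(\infty)$ is semisimple but not multiplicity-free as an $I$-module, so your appeal to Lemma \ref{D1_characters_lemma} should be phrased as existence of a $B$-eigenvector rather than multiplicity-freeness.)
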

\begin{proof}
Let $W\subseteq D_{0}(\infty)$ be a non-zero $KZ$-subrepresentation such that $\Pi$ stabilizes $W^{I(1)}$. The claim is $W=D_{0}(\infty)$. We have $\mathrm{Hom}_{K}(\tau, W)\neq 0$ for some $\tau\in\mathrm{soc}_{K}D_{0}$. We first consider the case $\tau=\sigma$.  

There exists a non-zero $(c_{i})\in\bigoplus_{i\in\mathbb{Z}}\overline{\mathbb{F}}_{p}$ such that \[\left(\sum_{i}c_{i}\iota_{i}\right)(\sigma)\subseteq W.\] We pick $(c_{i})$ with $\#(c_{i}):=\#\{i\in\mathbb{Z}:c_{i}\neq 0\}$ minimal. We first show that $\#(c_{i})=1$. The $\Pi$-action on $\left(\sum_{i}c_{i}\iota_{i}\right)(S_{1}^{\chi(\sigma)})$ gives $\left(\sum_{i}\lambda_{i}c_{i}\iota_{i}\right)(Q_{1}^{\chi(\sigma)^{s}})\subseteq W^{I(1)}$ which implies that $\left(\sum_{i}\lambda_{i}c_{i}\iota_{i}\right)(M(\sigma^{[s]}))\subseteq W$ because $M(\sigma^{[s]})$ is indecomposable. Hence
\begin{equation}\label{eq1}
\left(\sum_{i}\lambda_{i}c_{i}\iota_{i}\right)(\sigma_{1}\oplus\sigma_{1}')\subseteq W.
\end{equation} Now the $\Pi$-action on $\left(\sum_{i}\lambda_{i}c_{i}\iota_{i}\right)(S_{1}^{\chi(\sigma_{1})})$ and $\left(\sum_{i}\lambda_{i}c_{i}\iota_{i}\right)(S_{1}^{\chi(\sigma_{1}')})$ gives respectively \[\left(\sum_{i}\lambda_{i}c_{i}\iota_{i-1}\right)(Q_{1}^{\chi(\sigma_{1})^{s}})\subseteq W^{I(1)}\hspace{2mm}\mathrm{and}\hspace{2mm}\left(\sum_{i}\lambda_{i}c_{i}\iota_{i+1}\right)(Q_{1}^{\chi(\sigma_{1}')^{s}})\subseteq W^{I(1)}.\] Hence \begin{equation*}
\left(\sum_{i}\lambda_{i}c_{i}\iota_{i-1}\right)(E(\sigma_{2},\sigma_{1}^{[s]}))\subseteq W\hspace{2mm}\mathrm{and}\hspace{2mm}\left(\sum_{i}\lambda_{i}c_{i}\iota_{i+1}\right)(E(\sigma_{2}',\sigma_{1}'^{[s]}))\subseteq W. 
\end{equation*} 
The cyclicity of the $\Pi$-action on $I$-characters of $C$ and $C'$ then gives respectively \begin{equation}\label{eq2}
\left(\sum_{i}\lambda_{i}c_{i}\iota_{i-1}\right)(E(\sigma_{k},\sigma_{k-1}^{[s]}))\subseteq W\hspace{2mm}\mathrm{and}\hspace{2mm}\left(\sum_{i}\lambda_{i}c_{i}\iota_{i+1}\right)(E(\sigma_{k}',\sigma_{k-1}'^{[s]}))\subseteq W 
\end{equation} for all $2\leq k\leq l$. Therefore
\begin{equation}\label{eq3}
\left(\sum_{i}\lambda_{i} c_{i}\iota_{i-1}\right)(\sigma)\subseteq W\hspace{2mm}\mathrm{and}\hspace{2mm}\left(\sum_{i}\lambda_{i} c_{i}\iota_{i+1}\right)(\sigma)\subseteq W.
\end{equation} Thus, by increasing or decreasing the index $i$ if needed, we may assume $c_{0}\neq 0$. Now, repeating the above argument for $\big(\sum_{i}\lambda_{i} c_{i}\iota_{i-1}\big)(\sigma)\subseteq W$, we obtain \[\left(\sum_{i}\lambda_{i-1}\lambda_{i} c_{i}\iota_{i-2}\right)(\sigma)\subseteq W\hspace{2mm}\mathrm{and}\hspace{2mm}\left(\sum_{i}\lambda_{i-1}\lambda_{i} c_{i}\iota_{i}\right)(\sigma)\subseteq W.\] Note that $\big(\sum_{i}\lambda_{-1}\lambda_{0} c_{i}\iota_{i}\big)(\sigma)\subseteq W$. So it follows that \[\left(\sum_{i}(\lambda_{i-1}\lambda_{i}-\lambda_{-1}\lambda_{0})c_{i}\iota_{i}\right)(\sigma)\subseteq W.\] Write $c_{i}':=(\lambda_{i-1}\lambda_{i}-\lambda_{-1}\lambda_{0})c_{i}$ so that $\big(\sum_{i}c_{i}'\iota_{i}\big)(\sigma)\subseteq W$. If $\#(c_{i})>1$, then the hypothesis on $(\lambda_{i})$ contradicts the minimality of $(c_{i})$ because $\#(c_{i}')=\#(c_{i})-1$. Therefore $c_0\iota_{0}(\sigma)\subseteq W$ and hence $\iota_{0}(\sigma)\subseteq W$.

Now we repeat the above argument for $\iota_{0}(\sigma)\subseteq W$ to show that $\iota_{0}(D_{0})\subseteq W$. Indeed, the $\Pi$-action on $\iota_{0}(S_{1}^{\chi(\sigma)})$ gives \[\iota_{0}(M(\sigma^{[s]}))\subseteq W.\] By (\ref{eq3}), we have \[\iota_{-1}(\sigma)\subseteq W\hspace{2mm}\mathrm{and}\hspace{2mm}\iota_{1}(\sigma)\subseteq W.\] Using (\ref{eq1}) for the above inclusions, we obtain \[\iota_{1}(\sigma_{1})\subseteq W\hspace{2mm}\mathrm{and}\hspace{2mm}\iota_{-1}(\sigma_{1}')\subseteq W,\] and then using (\ref{eq2}), we get \[\iota_{0}(E(\sigma_{k},\sigma_{k-1}^{[s]}))\subseteq W\hspace{2mm}\mathrm{and}\hspace{2mm}\iota_{0}(E(\sigma_{k}',\sigma_{k-1}'^{[s]}))\subseteq W\] for all $2\leq k\leq l$. Together with the inclusion $\iota_{0}(M(\sigma^{[s]}))\subseteq W,$ this gives \[\iota_{0}(D_{0})\subseteq W.\] 

Repeat the argument for $\iota_{-1}(\sigma)\subseteq W$ and $\iota_{1}(\sigma)\subseteq W$ to obtain $\bigoplus_{i=0, \pm 1}\iota_{i}(D_{0})\subseteq W$, and so on. This process eventually gives $\bigoplus_{i\in\mathbb{Z}}\iota_{i}(D_{0})=D_{0}(\infty)\subseteq W$. 

If $\mathrm{Hom}_{K}(\tau, W)\neq 0$ for $\tau\neq\sigma$, then using the cyclicity of the $\Pi$-action as above, we reduce to the case $\mathrm{Hom}_{K}(\sigma, W)\neq 0$. 
\end{proof}

\begin{remark}\label{main-idea-rmk} The main idea here to construct an infinite-dimensional irreducible diagram is to arrange the $\Pi$-action on the infinite sum of a spliced module so that the cycling on one loop increases the index and the cycling on the other decreases the index. This construction does not work for $\mathrm{GL}_{2}(F)$ when $F$ has residue degree $1$ because the cyclic modules of $\mathrm{GL}_{2}(\mathbb{F}_{p})$ are principal series representations, i.e., extensions of the form $E(\tau,\tau^{[s]})$, and principal series are too small to form spliced modules with two loops.  
\end{remark}

\section{Proofs of main theorems}

\begin{proof}[Proof of Theorem \ref{thm:intro} for $n=2$]
We first construct a desired representation $\pi$ of $G=\mathrm{GL}_{2}(F)$ over $\overline{\mathbb{F}}_{p}$. The construction is similar to that of \cite[Theorem 3.1]{Le} or \cite[Theorem 1]{GS}. Let $\Omega$ be the smooth injective $K$-envelope of $D_{0}$ equipped with the $KZ$-action such that $\varpi$ acts trivially. The smooth injective $I$-envelope $\mathrm{inj}_{I}D_{1}$ of $D_{1}$ is an $I$-direct summand of $\Omega$. Let $e$ denote the projection of $\Omega$ onto $\mathrm{inj}_{I}D_{1}$. 
There is a unique $N$-action on $\mathrm{inj}_{I}D_{1}$ compatible with that of $I$ and compatible with the action of $N$ on $D_{1}$. By \cite{BP}, Lemma 9.6, 
there is a non-canonical $N$-action on $(1-e)(\Omega)$ extending the given $I$-action. This gives an $N$-action on $\Omega$ whose restriction to $IZ$ is 
compatible with the action coming from $KZ$ on $\Omega$.

Let $D(\lambda)=(D_{0}(\infty),D_{1}(\infty),\mathrm{can})$ be an irreducible infinite-dimensional diagram from Proposition \ref{irreducible_diagram_prop}. Let $\Omega(\infty):=\bigoplus_{i\in\mathbb{Z}}\Omega(i)$ with component-wise $KZ$-action where there is a fixed isomorphism $\Omega(i)\cong\Omega$ of 
$KZ$-representations for every $i\in\mathbb{Z}$. As before, denote the natural inclusion $\Omega\iso\Omega(i)\hookrightarrow \Omega(\infty)$ by $\iota_{i}$, and write $v_{i}:=\iota_{i}(v)$ for $v\in\Omega$. Let $\Omega_{\chi}$ denote the smooth injective $I$-envelope of an $I$-character $\chi$. We have \[e(\Omega)=\mathrm{inj}_{I}D_{1}=\mathrm{inj}_{I}S_{1}\oplus\mathrm{inj}_{I}Q_{1}=\bigoplus\Omega_{S_{1}^{\chi}}\oplus\Omega_{Q_{1}^{\chi^{s}}}.\] 
If $v\in(1-e)(\Omega)$, we define $\Pi v_{i}:=(\Pi v)_{i}$ for all integers $i$. Otherwise, we define \begin{equation*}
\Pi v_{i}:=
\begin{cases}
\lambda_{i}(\Pi v)_{i} &\text{if $v\in \Omega_{S_{1}^{\chi(\sigma)}}$,}\\
(\Pi v)_{i-1} &\text{if $v\in \Omega_{S_{1}^{\chi(\sigma_{1})}}$,}\\
(\Pi v)_{i+1} &\text{if $v\in \Omega_{S_{1}^{\chi(\sigma_{1}')}}$,}\\
(\Pi v)_{i} &\text{if $v\in \Omega_{S_{1}^{\chi}}$ for $\chi\in\{\chi(\sigma_{2}),\ldots ,\chi(\sigma_{l-1}),\chi(\sigma_{2}'),\ldots ,\chi(\sigma_{l-1}')\}$.}
\end{cases}
\end{equation*} 
By demanding that $\Pi^{2}$ acts trivially, this defines a 
smooth $N$-action on $\Omega(\infty)$ which is compatible with the $N$-action on $D_{1}(\infty)$, and whose restriction to $IZ$ is compatible with the action 
coming from $KZ$ on $\Omega(\infty)$. By \cite[Corollary 5.5.5]{Paskunas}, there is a smooth $G$-action on $\Omega(\infty)$. Take $\pi$ to be the $G$-representation 
generated by $D_{0}(\infty)$ inside $\Omega(\infty)$. The smooth representation $\pi$ has a property that $\mathrm{soc}_{K}\pi=\mathrm{soc}_{K}D_{0}(\infty)$. Since $D(\lambda)$ is irreducible and $\mathrm{soc}_{K}D_{0}(\infty)$ is infinite-dimensional, it follows that $\pi$ is irreducible and non-admissible. 

Note that the spliced module $D_{0}$, the diagram $D(\lambda)$, and the module $\Omega(\infty)$ are all defined over the residue field $\mathbb{F}_{p^{f}}$ of $F$. Hence, if $(\lambda_{i})\in\prod_{i\in\mathbb{Z}}\mathbb{F}_{p^{f}}^{\times}$ and $\lambda_{i-1}\lambda_{i}\neq\lambda_{-1}\lambda_{0}$ for all $i\neq 0$, then the $G$-representation $\pi$ has a model $\pi_{0}$ over $\mathbb{F}_{p^{f}}$ that is absolutely irreducible and non-admissible. This gives Theorem 1.1 for $n=2$.
\end{proof}

\begin{proof}[Proof of Theorem \ref{thm:schur}]
Let $\pi$ be a non-admissible irreducible representation of \linebreak $\mathrm{GL}_{2}(F)$ over $\overline{\mathbb{F}}_{p}$ constructed in the proof of Theorem 1.1 for $n=2$. 
Let $(\mu_i) \in \prod_{i \geq 0}\overline{\mathbb{F}}_{p}^{\times}$ be an $\mathbb{F}_{p^f}$-basis for $\overline{\mathbb{F}}_{p}$ with $\mu_0 = 1$. 
Let $\lambda_{-1} = \lambda_0 = 1$. Define inductively $\lambda_i = \frac{\mu_i}{\mu_{i-1}\lambda_{i-1}}$ for $i>0$ and let $\lambda_{i} = \lambda_{-i-1}$ for $i<0$. 
Then the restriction of scalars of $\pi$ to $\mathbb{F}_{p^f}$ is irreducible and its endomorphism algebra contains $\overline{\mathbb{F}}_{p}$. 
Indeed, for the irreducibility, suppose that $W \subseteq \pi$ is a non-zero $\mathbb{F}_{p^f}[G]$-subrepresentation. We have $\mathrm{Hom}_{\mathbb{F}_{p^{f}}[K]}(\tau,W)\neq 0$ for some weight $\tau$ in the $\mathbb{F}_{p^{f}}[K]$-socle of $D_{0}$. Let $\sigma_{\mathbb{F}_{p^{f}}}$ denote an $\mathbb{F}_{p^{f}}$-model of $\sigma$. Using the $\Pi$-action, we may assume $\tau=\sigma_{\mathbb{F}_{p^{f}}}$. The proof of Proposition 3.1 shows that $c_{0}\iota_0(\sigma_{\mathbb{F}_{p^{f}}}) \subseteq W$ for some $c_{0}\in\overline{\mathbb{F}}_{p}^{\times}$. Moreover, repeated use of (3.3) gives $\lambda_{0}\lambda_{1}c_{0}\iota_0(\sigma_{\mathbb{F}_{p^{f}}})\subseteq W$ and $\lambda_0 \lambda_j \left(\prod_{i=1}^{j-1} \lambda_i^2\right) c_{0}\iota_0(\sigma_{\mathbb{F}_{p^{f}}}) \subseteq W$ for all $j >1$. 
Since $\lambda_{0}\lambda_{1}=\mu_{1}$ and since $\lambda_0 \lambda_j \prod_{i=1}^{j-1} \lambda_i^2  = \mu_j$ for $j>1$  by an easy induction on $j$, we have $c \iota_0(\sigma_{\mathbb{F}_{p^{f}}}) \subseteq W$ for all $c \in \overline{\mathbb{F}}_{p}$ and thus $\iota_{0}(\sigma)\subseteq W$. 
Then one proceeds as in the proof of Proposition 3.1 to show that $D_{0}(\infty)\subseteq W$ which implies that $W=\pi$. 
\end{proof}

\begin{proof}[Proof of Theorem \ref{thm:intro} for $n>2$]
Let $\mathrm{P}=\mathrm{M}\mathrm{N}$ be the standard parabolic subgroup of $\mathrm{GL}_{n}$ with Levi subgroup $\mathrm{M}=\mathrm{GL}_{2}\times(\mathrm{GL}_{1})^{n-2}$. Let $\overline{\mathrm{P}}=\mathrm{M}\overline{\mathrm{N}}$ be the opposite parabolic subgroup. Let $\rho$ be a non-admissible irreducible representation of $\mathrm{GL}_{2}(F)$ over $\overline{\mathbb{F}}_{p}$ constructed in the proof of Theorem \ref{thm:intro} for $n=2$, and let $\chi$ be a character of $(F^{\times})^{n-2}$. Consider the smooth irreducible non-admissible representation $\rho\otimes\chi$ of $\mathrm{M}(F)$, and let \[\pi=\mathrm{Ind}_{\overline{\mathrm{P}}(F)}^{\mathrm{GL}_{n}(F)}(\rho\otimes\chi)\] be the parabolically induced representation of $\mathrm{GL}_{n}(F)$. It is clear that $\pi$ is non-admissible because \[\pi^{K(1)}=\left(\mathrm{Ind}_{\overline{\mathrm{P}}(\mathcal{O}_{F})}^{\mathrm{GL}_{n}(\mathcal{O}_{F})}(\rho\otimes\chi)\right)^{K(1)}=\mathrm{Ind}_{\overline{\mathrm{P}}(\mathbb{F}_{p^{f}})}^{\mathrm{GL}_{n}(\mathbb{F}_{p^{f}})}\left((\rho\otimes\chi)^{M(1)}\right)\] and the latter is not finite-dimensional. Here, $K(1)=\text{Ker}(\mathrm{GL}_{n}(\mathcal{O}_{F})\twoheadrightarrow\mathrm{GL}_{n}(\mathbb{F}_{p^{f}}))$ and $M(1)=\text{Ker}(\mathrm{M}(\mathcal{O}_{F})\twoheadrightarrow\mathrm{M}(\mathbb{F}_{p^{f}}))$.

Recall that for a Levi subgroup $\mathrm{L}\subseteq\mathrm{GL}_{n}$, an $\mathrm{L}(\mathcal{O}_{F})$-weight is, by definition, a smooth irreducible representation of $\mathrm{L}(\mathcal{O}_{F})$. The endomorphism algebra $\mathrm{End}_{\mathrm{L}(F)}(\text{c-Ind}_{\mathrm{L}(\mathcal{O}_{F})}^{\mathrm{L}(F)}\tau)$ of the compactly induced representation $\text{c-Ind}_{\mathrm{L}(\mathcal{O}_{F})}^{\mathrm{L}(F)}\tau$ of an $\mathrm{L}(\mathcal{O}_{F})$-weight $\tau$ is called the spherical algebra of $\mathrm{L}(F)$ and is denoted by $\mathcal{H}_{\mathrm{L}(F)}(\tau)$. For a smooth representation $V$ of $\mathrm{L}(F)$, an $\mathrm{L}(\mathcal{O}_{F})$-weight of $V$ simply means a smooth irreducible $\mathrm{L}(\mathcal{O}_{F})$-subrepresentation of $V.$ 

\begin{lemma}\label{lemma1-irreducibility}
If every $\mathrm{GL}_{n}(\mathcal{O}_{F})$-weight of $\pi$ is $\mathrm{M}$-regular (in the sense of \cite[Definition 2.4]{Her}), then $\pi$ is irreducible.
\end{lemma}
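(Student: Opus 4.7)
The plan is to show that any nonzero $G$-subrepresentation $\pi'\subseteq\pi$ equals $\pi$, by extracting a $\mathrm{GL}_{n}(\mathcal{O}_{F})$-weight of $\pi'$, transferring it via Frobenius reciprocity to an $\mathrm{M}(\mathcal{O}_{F})$-weight of $\rho\otimes\chi$, and then combining Herzig's comparison isomorphism with the irreducibility of $\rho\otimes\chi$ to recover all of $\pi$.

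First, I would note that any nonzero $G$-subrepresentation $\pi'\subseteq\pi$ is smooth, so $\pi'^{K(1)}\neq 0$; the finite group $K/K(1)\cong\mathrm{GL}_{n}(\mathbb{F}_{p^{f}})$ then acts on $\pi'^{K(1)}$ and produces an irreducible $K$-subrepresentation $\tau\hookrightarrow\pi'$, i.e.\ a $\mathrm{GL}_{n}(\mathcal{O}_{F})$-weight of $\pi$. By the hypothesis of the lemma, $\tau$ is $\mathrm{M}$-regular.

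Next, the Iwasawa decomposition $\mathrm{GL}_{n}(F)=\overline{\mathrm{P}}(F)\cdot K$ gives $\pi|_{K}=\mathrm{Ind}_{\overline{\mathrm{P}}(\mathcal{O}_{F})}^{K}(\rho\otimes\chi)$, and since $\overline{\mathrm{N}}(\mathcal{O}_{F})$ acts trivially on $\rho\otimes\chi$, Frobenius reciprocity yields
\[
\mathrm{Hom}_{K}(\tau,\pi)\;\cong\;\mathrm{Hom}_{\mathrm{M}(\mathcal{O}_{F})}\!\left(\tau_{\overline{\mathrm{N}}(\mathcal{O}_{F})},\,\rho\otimes\chi\right).
\]
The $\mathrm{M}$-regularity of $\tau$ guarantees that the $\overline{\mathrm{N}}(\mathcal{O}_{F})$-coinvariants $\tau_{\overline{\mathrm{N}}(\mathcal{O}_{F})}$ admit a distinguished irreducible $\mathrm{M}(\mathcal{O}_{F})$-cosocle $\tau_{\mathrm{M}}$, and hence $\tau_{\mathrm{M}}$ embeds into $\rho\otimes\chi$ as an $\mathrm{M}(\mathcal{O}_{F})$-weight.

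Finally, since $\rho\otimes\chi$ is irreducible as an $\mathrm{M}(F)$-representation, it is generated as such by the image of $\tau_{\mathrm{M}}$; compact-induction followed by parabolic induction therefore produces a surjection
\[
\mathrm{Ind}_{\overline{\mathrm{P}}(F)}^{\mathrm{GL}_{n}(F)}\!\left(\text{c-Ind}_{\mathrm{M}(\mathcal{O}_{F})}^{\mathrm{M}(F)}\tau_{\mathrm{M}}\right)\;\twoheadrightarrow\;\pi.
\]
Herzig's comparison isomorphism \cite{Her} for the $\mathrm{M}$-regular weight $\tau$ identifies the left-hand side with a localization of $\text{c-Ind}_{K}^{\mathrm{GL}_{n}(F)}\tau$ along the mod $p$ Satake transform $\mathcal{S}\colon\mathcal{H}_{\mathrm{GL}_{n}(F)}(\tau)\to\mathcal{H}_{\mathrm{M}(F)}(\tau_{\mathrm{M}})$. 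Chasing this identification through the inclusion $\tau\hookrightarrow\pi'$ shows that the $G$-subrepresentation of $\pi$ generated by $\tau$ already equals $\pi$, so $\pi\subseteq\pi'$ and hence $\pi'=\pi$.

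The main obstacle I expect is the last step: converting the $\mathrm{M}(F)$-level surjection back into a $G$-level inclusion inside $\pi'$. This is exactly where the $\mathrm{M}$-regularity hypothesis on \emph{every} $\mathrm{GL}_{n}(\mathcal{O}_{F})$-weight of $\pi$ is used --- it ensures that the passage between the $K$-weight $\tau$ and the $\mathrm{M}(\mathcal{O}_{F})$-weight $\tau_{\mathrm{M}}$ is compatible with the respective spherical Hecke actions on both sides, so that no spurious subrepresentation of $\pi$ can arise outside the image of the compactly induced module. Careful application of Herzig's comparison (together with the fact that $\rho\otimes\chi$ has no nontrivial $\mathrm{M}(F)$-quotients) is what removes this obstacle.
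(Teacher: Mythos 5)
Your outline follows the paper's architecture in broad strokes (extract a weight $\tau$ from a nonzero subrepresentation, transfer to an $\mathrm{M}(\mathcal{O}_F)$-weight via Frobenius reciprocity, invoke Herzig's comparison isomorphism to lift back to $\mathrm{GL}_n(F)$), but there is a genuine gap at the final step, and the ingredient you think plugs it --- $\mathrm{M}$-regularity --- is not the one that actually does.

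The problem is that Herzig's Theorem 3.1 does not directly identify $\mathrm{Ind}_{\overline{\mathrm{P}}(F)}^{\mathrm{GL}_{n}(F)}\bigl(\text{c-Ind}_{\mathrm{M}(\mathcal{O}_{F})}^{\mathrm{M}(F)}\tau_{\mathrm{M}}\bigr)$ with a quotient of $\text{c-Ind}_{K}^{\mathrm{GL}_{n}(F)}\tau$. In the form you need it, it identifies
\[
\text{c-Ind}_{K}^{\mathrm{GL}_{n}(F)}\tau\otimes_{\mathcal{H}_{\mathrm{GL}_{n}(F)}(\tau),\psi}\overline{\mathbb{F}}_{p}
\quad\text{with}\quad
\mathrm{Ind}_{\overline{\mathrm{P}}(F)}^{\mathrm{GL}_{n}(F)}\Bigl(\text{c-Ind}_{\mathrm{M}(\mathcal{O}_{F})}^{\mathrm{M}(F)}\tau_{\mathrm{M}}\otimes_{\mathcal{H}_{\mathrm{M}(F)}(\tau_{\mathrm{M}}),\psi}\overline{\mathbb{F}}_{p}\Bigr),
\]
i.e.\ only \emph{after} specializing along a character $\psi$ of the spherical Hecke algebra. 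To have such a $\psi$ available, and to know that the map $f\colon\text{c-Ind}_{\mathrm{M}(\mathcal{O}_{F})}^{\mathrm{M}(F)}\tau_{\mathrm{M}}\to\rho\otimes\chi$ factors through the $\psi$-coinvariants, you must show that $f$ is a Hecke eigenvector. Without that, the best you get from Herzig is a surjection from $\text{c-Ind}_{K}^{\mathrm{GL}_{n}(F)}\tau\otimes_{\mathcal{H}_{\mathrm{GL}_{n}(F)}(\tau),\mathcal{S}}\mathcal{H}_{\mathrm{M}(F)}(\tau_{\mathrm{M}})$ onto $\pi$, and the $\mathrm{GL}_{n}(F)$-subrepresentation generated by the image of $\tau$ in \emph{that} module is only $\text{c-Ind}_{K}^{\mathrm{GL}_{n}(F)}\tau\otimes 1$, which need not be everything; so you cannot conclude that $\tau$ generates $\pi$.

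Establishing the eigenvector property is exactly where the paper does the real work, and it is not a consequence of $\mathrm{M}$-regularity. $\mathrm{M}$-regularity is only needed so that Herzig's comparison holds at all. The eigenvector property instead uses commutativity of the spherical Hecke algebras, the fact that $\rho$ has a central character (handling the operator $S$), and --- crucially --- the specific structure of $\rho$ as constructed from the spliced module $D_{0}$: one shows $T\cdot f_{\rho}=0$ by bounding the length of the $K$-subrepresentation of $\rho$ generated by $\Pi v$ for $v\in\tau_{\rho}^{I(1)}$ (at most $3$, from the Hasse diagram) against the length of $\mathrm{Ind}_{I}^{K}\chi(\tau_{\rho})^{s}$ (at least $4$, since $f>1$), concluding $\tau_{\rho}$ is not a Jordan--H\"older factor of the image of $T$. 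This argument depends on the construction of $\rho$ and cannot be replaced by a general appeal to irreducibility of $\rho\otimes\chi$ or to Hecke-compatibility of Frobenius reciprocity, as your proposal suggests. You would need to supply this step to make the argument go through.
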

\begin{proof}
Let $\tau$ be a (non-zero) $\mathrm{GL}_{n}(\mathcal{O}_{F})$-weight of $\pi$. We will show that $\tau$ generates $\pi$ as a $\mathrm{GL}_{n}(F)$-representation. By Frobenius reciprocity, the canonical inclusion $\tau\hookrightarrow\pi\big|_{\mathrm{GL}_{n}(\mathcal{O}_{F})}$ corresponds to an injection $\tau^{\mathrm{N}(\mathbb{F}_{p^{f}})}\hookrightarrow(\rho\otimes\chi)\big|_{M(\mathcal{O}_{F})}$ which makes   $\tau^{\mathrm{N}(\mathbb{F}_{p^{f}})}$ into an $\mathrm{M}(\mathcal{O}_{F})$-weight of $\rho\otimes\chi$, cf. \cite[Lemma 2.3 and (2.13)]{Her}. Let $\tau_{\rho}:=\tau^{\mathrm{N}(\mathbb{F}_{p^{f}})}\big|_{\mathrm{GL}_{2}(\mathcal{O}_{F})}$ and $\chi_{0}:=\chi\big|_{(\mathcal{O}_{F}^{\times})^{n-2}}$ so that $\tau\cong\tau_{\rho}\otimes\chi_{0}$. The spherical Hecke algebra $\mathcal{H}_{\mathrm{M}(F)}(\tau^{\mathrm{N}(\mathbb{F}_{p^{f}})})$ of $\mathrm{M}(F)$ is isomorphic to the tensor product $\mathcal{H}_{\mathrm{GL}_{2}(F)}(\tau_{\rho})\otimes\mathcal{H}_{(F^{\times})^{n-2}}(\chi_{0})$ of the spherical Hecke algebras of $\mathrm{GL}_{2}(F)$ and $(F^{\times})^{n-2}$. The algebra $\mathcal{H}_{\mathrm{GL}_{2}(F)}(\tau_{\rho})$ is commutative by \cite[Proposition 8 (1)]{BL} and the algebra $\mathcal{H}_{(F^{\times})^{n-2}}(\chi_{0})$ is commutative by \cite[\S 2.10]{HV2}. Hence, the algebra $\mathcal{H}_{\mathrm{M}(F)}(\tau^{\mathrm{N}(\mathbb{F}_{p^{f}})})$ is commutative. Under Frobenius reciprocity, the injection $\tau^{\mathrm{N}(\mathbb{F}_{p^{f}})}\hookrightarrow(\rho\otimes\chi)\big|_{M(\mathcal{O}_{F})}$ corresponds to a map $f\in\mathrm{Hom}_{\mathrm{M}(F)}\left(\text{c-Ind}_{\mathrm{M}(\mathcal{O}_{F})}^{\mathrm{M}(F)}\tau^{\mathrm{N}(\mathbb{F}_{p^{f}})}, \rho\otimes\chi\right)$. We claim that $f$ is an eigenvector for the action of $\mathcal{H}_{\mathrm{M}(F)}(\tau^{\mathrm{N}(\mathbb{F}_{p^{f}})})$ on $\mathrm{Hom}_{\mathrm{M}(F)}\left(\text{c-Ind}_{\mathrm{M}(\mathcal{O}_{F})}^{\mathrm{M}(F)}\tau^{\mathrm{N}(\mathbb{F}_{p^{f}})}, \rho\otimes\chi\right)$. Indeed, the restriction of the injection $\tau^{\mathrm{N}(\mathbb{F}_{p^{f}})}\hookrightarrow(\rho\otimes\chi)\big|_{M(\mathcal{O}_{F})}$ to $\mathrm{GL}_{2}(\mathcal{O}_{F})$ gives a map $f_{\rho}\in\mathrm{Hom}_{\mathrm{GL}_{2}(F)}\left(\text{c-Ind}_{\mathrm{GL}_{2}(\mathcal{O}_{F})}^{\mathrm{GL}_{2}(F)}\tau_{\rho}, \rho\right)$. 
It is enough to show that $f_{\rho}$ is an eigenvector for the action of $\mathcal{H}_{\mathrm{GL}_{2}(F)}(\tau_{\rho})$ on $\mathrm{Hom}_{\mathrm{GL}_{2}(F)}\left(\text{c-Ind}_{\mathrm{GL}_{2}(\mathcal{O}_{F})}^{\mathrm{GL}_{2}(F)}\tau_{\rho}, \rho\right)$. The Hecke algebra $\mathcal{H}_{\mathrm{GL}_{2}(F)}(\tau_{\rho})$ is isomorphic to the polynomial algebra $\overline{\mathbb{F}}_{p}[S^{\pm 1},T]$ where the Hecke operators $S$ and $T$ correspond to the characteristic functions supported on $\mathrm{GL}_{2}(\mathcal{O}_{F})\left(\begin{smallmatrix}\varpi & 0 \\0 & \varpi\end{smallmatrix}\right)\mathrm{GL}_{2}(\mathcal{O}_{F})$ and $\mathrm{GL}_{2}(\mathcal{O}_{F})\left(\begin{smallmatrix}1 & 0 \\0 & \varpi\end{smallmatrix}\right)\mathrm{GL}_{2}(\mathcal{O}_{F})$ respectively. Since $\rho$ has central character, $f_{\rho}$ is an eigenvector for the operator $S$. We now show that $T\cdot f_{\rho}=f_{\rho}\circ T=0$. By \cite[Lemma 2.1]{sch22}, $f_{\rho}(T(\tau_{\rho}))$ is contained in a $K$-subrepresentation $W$ of $\rho$ generated by $\Pi v$ for a non-zero $v\in\tau_{\rho}^{I(1)}$. As $\rho$ is constructed from a spliced module, $W$ has length at most $3$ (see the Hasse diagram). On the other hand, $W$ naturally receives a surjection from $\mathrm{Ind}_{I}^{K}\chi(\tau_{\rho})^{s}$ which is multiplicity-free of length at least $4$ (as $f>1$) and has socle isomorphic to $\tau_{\rho}$, cf. \cite[Theorem 2.4]{BP}. Therefore $\tau_{\rho}$ is not a Jordan-H\"{o}lder factor of $W$. Hence $f_{\rho}(T(\tau_{\rho}))=0$. As $f_{\rho}$ and $T$ are $G$-equivariant, $T\cdot f=0$ on $\text{c-Ind}_{\mathrm{GL}_{2}(\mathcal{O}_{F})}^{\mathrm{GL}_{2}(F)}\tau_{\rho}$. This finishes the proof of the claim. 

The set of eigenvalues of $f$ gives a character $\psi:\mathcal{H}_{\mathrm{M}(F)}(\tau^{\mathrm{N}(\mathbb{F}_{p^{f}})})\rightarrow\overline{\mathbb{F}}_{p}$
and a surjective map
\begin{equation}\label{map1}
\text{c-Ind}_{\mathrm{M}(\mathcal{O}_{F})}^{\mathrm{M}(F)}\tau^{\mathrm{N}(\mathbb{F}_{p^{f}})}\otimes_{\mathcal{H}_{\mathrm{M}(F)}(\tau^{\mathrm{N}(\mathbb{F}_{p^{f}})}),\psi}\overline{\mathbb{F}}_{p}\twoheadrightarrow\rho\otimes\chi
\end{equation} of $\mathrm{M}(F)$-representations. Further, as $\tau$ is $\mathrm{M}$-regular, there is a natural isomorphism 
\begin{equation}\label{map2}
\text{c-Ind}_{\mathrm{GL}_{n}(\mathcal{O}_{F})}^{\mathrm{GL}_{n}(F)}\tau\otimes_{\mathcal{H}_{\mathrm{GL}_{n}(F)}(\tau),\psi}\overline{\mathbb{F}}_{p}\iso\text{Ind}_{\overline{\mathrm{P}}(F)}^{\mathrm{GL}_{n}(F)}\left(\text{c-Ind}_{\mathrm{M}(\mathcal{O}_{F})}^{\mathrm{M}(F)}\tau^{\mathrm{N}(\mathbb{F}_{p^{f}})}\otimes_{\mathcal{H}_{\mathrm{M}(F)}(\tau^{\mathrm{N}(\mathbb{F}_{p^{f}})}),\psi}\overline{\mathbb{F}}_{p}\right)
\end{equation} of $\mathrm{GL}_{n}(F)$-representations by  \cite[Theorem 3.1]{Her}. Therefore, (\ref{map1}) and (\ref{map2}) together give a surjective map \begin{equation}\label{map3}
\text{c-Ind}_{\mathrm{GL}_{n}(\mathcal{O}_{F})}^{\mathrm{GL}_{n}(F)}\tau\otimes_{\mathcal{H}_{\mathrm{GL}_{n}(F)}(\tau),\psi}\overline{\mathbb{F}}_{p}\twoheadrightarrow\pi
\end{equation} of $\mathrm{GL}_{n}(F)$-representations because $\text{Ind}_{\overline{\mathrm{P}}(F)}^{\mathrm{GL}_{n}(F)}$ is exact. Since $\tau$ generates the left-hand side of (\ref{map3}) as a $\mathrm{GL}_{n}(F)$-representation, it also generates $\pi$ as a $\mathrm{GL}_{n}(F)$-representation. 

Now, if $\pi'\subseteq\pi$ is a non-zero subrepresentation, then $\pi'$ contains a (non-zero) $\mathrm{GL}_{n}(\mathcal{O}_{F})$-weight. By the previous paragraph, this weight generates $\pi$ as a $\mathrm{GL}_{n}(F)$-representation. Hence $\pi'=\pi$. 
\end{proof}

\begin{lemma}\label{lemma2-irreducibility}
	There exists a smooth character $\chi$ of $(F^{\times})^{n-2}$ such that $\pi=\mathrm{Ind}_{\overline{\mathrm{P}}(F)}^{\mathrm{GL}_{n}(F)}(\rho\otimes\chi)$ is irreducible. 
\end{lemma}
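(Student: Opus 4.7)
The plan is to invoke Lemma \ref{lemma1-irreducibility} and reduce the problem to arranging that, for a suitable choice of $\chi$, every $\mathrm{GL}_n(\mathcal{O}_F)$-weight of $\pi$ is $\mathrm{M}$-regular. Two ingredients are needed: a finiteness statement on the $\mathrm{GL}_2(\mathcal{O}_F)$-weights appearing in $\rho$, and a genericity statement saying $\mathrm{M}$-regularity is open on $\chi$.

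For the first ingredient, I would use the construction of $\rho$ from the $n=2$ case: $\rho$ is generated by $D_0(\infty)$ inside $\Omega(\infty)$, and satisfies $\mathrm{soc}_K\rho=\mathrm{soc}_K D_0(\infty)=\bigoplus_{i\in\mathbb{Z}}\mathrm{soc}_\Gamma D_0$. From the Hasse diagram of $D_0$, the irreducible constituents of $\mathrm{soc}_\Gamma D_0$ lie in the finite set $S:=\{\sigma,\sigma_1,\ldots,\sigma_{l-1},\sigma_1',\ldots,\sigma_{l-1}'\}$. Hence only finitely many isomorphism classes of $\mathrm{GL}_2(\mathcal{O}_F)$-weights can embed into $\rho$. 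By Frobenius reciprocity (Iwasawa decomposition $\mathrm{GL}_n(F)=\overline{\mathrm{P}}(F)\cdot\mathrm{GL}_n(\mathcal{O}_F)$), any $\mathrm{GL}_n(\mathcal{O}_F)$-weight $\tau$ of $\pi$ gives an inclusion $\tau^{\mathrm{N}(\mathbb{F}_{p^f})}\hookrightarrow(\rho\otimes\chi)|_{\mathrm{M}(\mathcal{O}_F)}$ and, exactly as in the proof of Lemma \ref{lemma1-irreducibility}, splits as $\tau\cong\tau_\rho\otimes\chi_0$ with $\tau_\rho\in S$ and $\chi_0:=\chi|_{(\mathcal{O}_F^\times)^{n-2}}$.

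For the second ingredient, I would unpack Herzig's Definition 2.4 for the specific Levi $\mathrm{M}=\mathrm{GL}_2\times(\mathrm{GL}_1)^{n-2}$ and verify that, for each fixed $\tau_\rho\in S$, the condition that $\tau_\rho\otimes\chi_0$ is $\mathrm{M}$-regular excludes only finitely many values of $\chi_0$ (essentially forcing the $n-2$ characters comprising $\chi_0$ to be pairwise distinct and distinct from finitely many characters determined by the highest weight of $\tau_\rho$). Since $S$ is finite and the character group $\mathrm{Hom}((\mathcal{O}_F^\times)^{n-2},\overline{\mathbb{F}}_p^\times)$ is infinite, one can pick $\chi_0$ avoiding the finite union of bad loci, and then extend it arbitrarily to a smooth character $\chi$ of $(F^\times)^{n-2}$. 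With this choice, every $\mathrm{GL}_n(\mathcal{O}_F)$-weight of $\pi$ is $\mathrm{M}$-regular, so $\pi$ is irreducible by Lemma \ref{lemma1-irreducibility}.

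The expected obstacle is only the bookkeeping in the second step: making $\mathrm{M}$-regularity for $\tau_\rho\otimes\chi_0$ explicit in terms of the highest-weight data of $\tau_\rho$ and the tuple of characters $\chi_0=(\chi_{0,3},\ldots,\chi_{0,n})$. Once this is visible as a finite conjunction of non-equalities, the genericity argument is automatic; no further structural input about $\rho$ beyond the finiteness of $S$ is required.
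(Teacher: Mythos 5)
Your overall strategy matches the paper's: reduce via Lemma \ref{lemma1-irreducibility} to arranging $\mathrm{M}$-regularity, observe that the $\mathrm{GL}_2(\mathcal{O}_F)$-socle of $\rho$ is supported on finitely many isomorphism classes, and then choose $\chi$ to dodge a ``bad locus.'' However, the genericity step as you have written it has two genuine problems.

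First, the group $\mathrm{Hom}\bigl((\mathcal{O}_F^\times)^{n-2},\overline{\mathbb{F}}_p^\times\bigr)$ is \emph{finite}, not infinite: any smooth $\overline{\mathbb{F}}_p^\times$-valued character of $\mathcal{O}_F^\times$ must kill the pro-$p$ group $1+\mathfrak{m}_F$ (since $\overline{\mathbb{F}}_p^\times$ has no $p$-torsion) and hence factors through $\mathbb{F}_{p^f}^\times$, giving exactly $p^f-1$ characters per factor and $(p^f-1)^{n-2}$ in total. So ``avoid a finite union of bad loci inside an infinite group'' is not an argument; one needs an actual count showing the good set is nonempty.

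Second, once you try to count, the criterion ``the $n-2$ component characters of $\chi_0$ are pairwise distinct'' is far too strong: it already requires $p^f-1\geq n-2$, which fails for $n$ large, so no such $\chi_0$ exists. The correct observation, which the paper exploits, is that for a weight $\tau=F(a_1,\ldots,a_n)$ normalized with $0\leq a_i-a_{i+1}\leq p^f-1$, the sequence $a_2,\ldots,a_n$ is weakly decreasing, so $\mathrm{M}$-regularity (distinctness of $a_2,\ldots,a_n$) follows from the much weaker condition that \emph{consecutive} $a_i$ are incongruent mod $p^f-1$. This reduces the constraint on $\chi_0=(\chi_{0,3},\ldots,\chi_{0,n})$ to: $\chi_{0,3}$ avoids the (at most $4f-1<p^f-1$) determinant powers of $\mathrm{soc}_{\mathrm{GL}_2(\mathcal{O}_F)}\rho$, and $\chi_{0,i}\neq\chi_{0,i+1}$ for $3\leq i\leq n-1$. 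The paper then makes this completely explicit by taking $\chi_0$ to alternate between just two values $F(a)$ and $F(b)$ with $a\neq b$ and $a$ off the determinant-power set; this works uniformly in $n$. Without the ``consecutive, not pairwise'' reduction, your argument does not go through.
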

\begin{proof}
We use the notation $F(a_{1},a_{2},\ldots,a_{n})$ in \cite[\S 3.3]{Her-thesis} to denote weights. By Lemma \ref{lemma1-irreducibility}, it suffices to show that there exists a smooth character $\chi$ of $(F^{\times})^{n-2}$ such that every $\mathrm{GL}_{n}(\mathcal{O}_{F})$-weight of $\pi$ is $\mathrm{M}$-regular. We pick $0\leq a,b<p^{f}-1$ such that $a\neq b$ and $a$ is different from all the determinant powers of weights in $\mathrm{soc}_{\mathrm{GL}_{2}(\mathcal{O}_{F})}\rho$. Such an $a$ exists because there are at most $4f-1$ distinct weights in $\mathrm{soc}_{\mathrm{GL}_{2}(\mathcal{O}_{F})}\rho=\mathrm{soc}_{\mathrm{GL}_{2}(\mathcal{O}_{F})}D_{0}(\infty)$, and $p^{f}-1>4f-1$ for $p>3$ and $f>1$. Consider the alternating tensor product $\chi_{0}=F(a)\otimes F(b)\otimes F(a)\ldots$ of $F(a)$ and $F(b)$ as a character of $(\mathcal{O}_{F}^{\times})^{n-2}$, and let $\chi$ be a character of $(F^{\times})^{n-2}$ such that $\chi|_{(\mathcal{O}_{F}^{\times})^{n-2}}=\chi_{0}$. We claim that $\chi$ works. Indeed, let $\tau=F(a_{1},\ldots,a_{n})$ be a $\mathrm{GL}_{n}(\mathcal{O}_{F})$-weight of $\pi$ with $p^{f}-1\geq a_{i}-a_{i+1}\geq 0$ for all $i$. Note that $\tau$ is $\mathrm{M}$-regular if and only if $a_{2}, a_{3}, \ldots, a_{n}$ are distinct, cf. the paragraph after \cite[Definition 2.4]{Her}. Since $\tau^{\mathrm{N}(\mathbb{F}_{p^{f}})}=F(a_{1},a_{2})\otimes F(a_{3})\otimes\ldots\otimes F(a_{n})$ is an $\mathrm{M}(\mathcal{O}_{F})$-weight of $\rho\otimes\chi$, we find that $a_{2}$ modulo $p^{f}-1$ is the determinant power of a weight in $\mathrm{soc}_{\mathrm{GL}_{2}(\mathcal{O}_{F})}\rho$, and for $i\geq 3$, $a_{i}\equiv a\mod{p^{f}-1}$ (resp. $b\mod{p^{f}-1}$) if $i$ is odd (resp. even). By the construction of $\chi$, we have $a_{i}\not\equiv a_{i+1}\mod{p^{f}-1}$ for all $2\leq i\leq n-1$. As the sequence $a_{2}, a_{3},\ldots ,a_{n}$ is decreasing, this implies that $a_{i}\neq a_{j}$ for all $2\leq i,j\leq n$ and $i\neq j$. 
\end{proof} 

We now take $\chi$ as in the proof of Lemma \ref{lemma2-irreducibility}. Then it follows from Lemma \ref{lemma2-irreducibility} that $\mathrm{GL}_{n}(F)$ admits a smooth irreducible non-admissible representation $\pi=\mathrm{Ind}_{\overline{\mathrm{P}}(F)}^{\mathrm{GL}_{n}(F)}(\rho\otimes\chi)$ over $\overline{\mathbb{F}}_{p}$. As explained in the proof of Theorem \ref{thm:intro} for $n=2$, the $\mathrm{GL}_{2}(F)$-representation $\rho$ can be chosen to have a model $\rho_{0}$ over $\mathbb{F}_{p^{f}}$. Then \[\pi_{0}=\mathrm{Ind}_{\overline{\mathrm{P}}(F)}^{\mathrm{GL}_{n}(F)}(\rho_{0}\otimes\chi)\] is a model of $\pi$ over $\mathbb{F}_{p^{f}}$ because $\text{Ind}_{\overline{\mathrm{P}}(F)}^{\mathrm{GL}_{n}(F)}$ commutes with scalar extension \cite[Proposition III.12 (i)]{HV}. It is clear that $\pi_{0}$ is absolutely irreducible and non-admissible. 
\end{proof}

\begin{remark}
	We remark that the methods of this note to construct non-admissible irreducible representations also apply to other connected split reductive groups $\mathbb{G}$ whenever $\mathbb{G}$ contains $\mathrm{GL}_{2}$ as a Levi subgroup, e.g., $\mathrm{GSp}_{4}$ or $\mathrm{G}_{2}$.
\end{remark}  
\bibliography{nonadm}
\bibliographystyle{amsalpha}
\end{document}